\theoremstyle{plain}
  \newtheorem{theorem}{Theorem}[section]
  \newtheorem{proposition}[theorem]{Proposition}
  \newtheorem{corollary}[theorem]{Corollary}
\theoremstyle{definition}
\theoremstyle{remark}
\numberwithin{equation}{section}
\def\tempbaselines
\def\diagram#1{\null\,\vcenter{\tempbaselines
\mathsurround=0pt
    \ialign{\hfil$##$\hfil&&\quad\hfil$##$\hfil\crcr
      \mathstrut\crcr\noalign{\kern-\baselineskip}
  #1\crcr\mathstrut\crcr\noalign{\kern-\baselineskip}}}\,}
\def\clap#1{\hbox to 0pt{\hss$#1$\hss}}
\def \End{\mathop{\rm End}\nolimits} 
\def \Ext{\mathop{\rm Ext}\nolimits} 
\def \Hom{\mathop{\rm Hom}\nolimits}
\def \Rad{\mathop{\rm Rad}\nolimits}
\def \Soc{\mathop{\rm Soc}\nolimits}
\def\FF{{\mathbb F}}
\begin{document}

\title[Young Modules and Schur algebras]
{The Structure of Young Modules and Schur algebras in Characteristic 2}

\author{Moriah Elkin}\email{elkin048@umn.edu}\author{Peter Webb}
\email{webb@math.umn.edu}
\address{School of Mathematics\\
University of Minnesota\\
Minneapolis, MN 55455, USA}

\subjclass[2020]{Primary: 20C30; Secondary: 20G43, 20-08}

\thanks{The first author was supported by 
a University of Minnesota Undergraduate Research Scholarship.}
\keywords{Young module, symmetric group, Schur algebra}
\date{November 2020}

\begin{abstract}
We compute explicitly the submodule structure of the Young modules for symmetric groups $S_n$ over fields of characteristic 2, when $n\le 7$. We use this information to compute the submodule structure of indecomposable projectives for the corresponding Schur algebras when $n\le 5$, and we give give partial information when $n=6,7$, including the Gabriel quiver and the structure of the Weyl modules. We resolve all Morita equivalences between blocks of these algebras.
\end{abstract}

\maketitle

\section{Introduction}

The Young modules $Y^\lambda$ play a fundamental role in the representation theory of the symmetric groups $S_n$. They are the indecomposable summands of the Young permutation modules $M^\lambda$, which are the permutation modules on the cosets of the Young subgroups $S_\lambda$. The isomorphism types of the $Y^\lambda$, as well as the $M^\lambda$ and $S_\lambda$, are indexed by partitions $\lambda$ of $n$. Over fields of characteristic 0 the Young modules coincide with the Specht modules, and form a complete set of irreducible modules. The situation is different over fields of positive characteristic $p$. Now the Young modules are no longer irreducible, in general, and they provide a transition between the irreducible modules, the Specht modules, and the larger projective indecomposable modules. Being typically more tractable than the Specht modules or the irreducible modules, they are useful in constructing larger representations. They also play a key role in the study of the Schur algebras, via the Schur-Weyl correspondence.

Explicit computations of the structure of Young modules are only known for symmetric groups of small degree, or for partitions with few parts, or of a special kind. In this work we extend such computations to all Young modules as far as the symmetric group $S_7$ in characteristic 2.
We also describe explicitly the structure of the Schur algebras $S_K(n,n)$ with $n\le 5$ and $K$ of characteristic 2, giving partial information when $n=6,7$. 
Our descriptions are by means of diagrams. Included in our description of the Young modules are the filtrations by Specht modules that correspond to Weyl filtrations of the projective modules for the Schur algebra. Our methods depend heavily on computer calculation.

Some of our descriptions of Young modules $Y^\lambda$ already appear in the literature, and we include the known cases here for completeness, so as to give a comprehensive picture. Descriptions of Young modules that are already known can be found as follows. When $\lambda=[n]$ and $\lambda=[n-1,1]$ the Young modules are well-known and their structure follows directly from \cite{Jam1}. When $\lambda=[n-2,2]$ and $\lambda=[n-2,1^2]$ they are described in \cite{BK} and \cite{MO}, for instance. In \cite{KMT} the structure is given when $\lambda=[n-3,3]$. When $\lambda$ is $p$-regular the Young module $Y^\lambda$ is projective, and when $p=2$ the projective modules are known for $S_n$ when $n\le 5$, see \cite{Ben1}. 
The radical layers of the indecomposable projectives are known when $n=6$ and are shown in \cite{Ben1}, but a full description of the projectives when $n=6$, in terms of diagrams showing non-split sections, was not given there.
We note that several of the references just mentioned provide good examples of how precise descriptions of Young modules can be used to resolve other questions in representation theory.


From our calculations, we can make some observations about the structure of the blocks that appear. We recall that each block of representations of a symmetric group is assigned a weight, that is the number of rim $p$-hooks that must be removed from any partition parametrizing a representation such as $Y^\lambda$ so as to leave a $p$-core.
The representations we consider all lie in blocks of weight 0, 1, 2 or 3. 
The blocks of weight 0 are simple projective modules (blocks of defect 0) and the blocks of weight 1 were shown to be Morita equivalent by Scopes \cite{Sco}. From our calculations with the remaining blocks of weights 2 and 3 we obtain the following.

\begin{corollary}
The principal block of $\FF_2S_5$ and the non-principal block of $\FF_2S_7$ of weight 2 are Morita equivalent, as are the corresponding blocks of the Schur algebras $S_{\FF_2}(5,5)$ and $S_{\FF_2}(7,7)$.
\end{corollary}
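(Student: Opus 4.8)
The plan is to read off, from the explicit descriptions obtained in the earlier sections, a presentation of each of the four block algebras by its Gabriel quiver together with a minimal set of relations, and then to check that the two presentations coincide in each pair. Recall that two finite-dimensional algebras are Morita equivalent if and only if their basic algebras are isomorphic, and that the basic algebra of a block $A$ with simple modules $S_1,\dots,S_r$ and projective covers $P_1,\dots,P_r$ is $\bigoplus_{i,j}\Hom_A(P_i,P_j)$ with composition as multiplication. So it suffices, for each pair of blocks, to produce a bijection between the sets of simple modules under which the projective covers have matching submodule lattices \emph{and} the homomorphisms between the projectives compose in the same way.

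For the symmetric group blocks, the principal block $B$ of $\FF_2S_5$ and the non-principal (weight $2$) block $B'$ of $\FF_2S_7$ each have exactly two simple modules and five Specht modules, the Specht modules being indexed by the partitions of $5$, respectively $7$, that have the relevant $2$-core. Their projective indecomposables are the Young modules $Y^{[5]},Y^{[3,2]}$ and $Y^{[6,1]},Y^{[4,3]}$, whose submodule structure was determined above. I would match these up --- the pairing being forced by the shapes of the diagrams, as $B'$ has no trivial module --- read off the arrows of the quiver and the loops at each vertex, and then verify that their composites, equivalently the way each layer of one projective embeds in the other, satisfy the same relations on the two sides. Since $B$ and $B'$ are blocks with dihedral defect group of order $8$ and two simple modules, the possible basic algebras are tightly restricted by the classification of algebras of dihedral type, so the computation only has to identify which algebra in that list occurs, and it turns out to be the same for $B$ and $B'$.

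For the Schur algebras, the block of $S_{\FF_2}(n,n)$ corresponding to $B$ is Morita equivalent to $\End_{\FF_2S_n}\bigl(\bigoplus_{\lambda}Y^\lambda\bigr)$, the sum taken over the five partitions $\lambda$ of $n$ lying in the block: this is because $(\FF_2^{\,n})^{\otimes n}$ is, as an $\FF_2S_n$-module, a direct sum of Young permutation modules, hence --- up to multiplicities, which do not change the Morita type --- of the Young modules $Y^\lambda$ with $\lambda\vdash n$. Since the structure of all five Young modules in each of $B$ and $B'$ was computed above, the task reduces to matching these two sets of modules, with their submodule lattices and the homomorphisms among them, under the bijection of partitions --- precisely the comparison made for $B,B'$ but now with five modules instead of two. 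Alternatively, one can compare directly the five indecomposable projectives of the two Schur-algebra blocks, together with their Weyl filtrations, using the Gabriel quivers and Weyl module structures already computed.

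The hard part is the verification of the relations, not any numerical coincidence: two block algebras can have the same quiver, the same Cartan matrix, and the same dimension vectors for every indecomposable projective without being isomorphic, so the comparison has to be made at the level of how submodules of one projective embed in another and how composites of homomorphisms behave. This is exactly the information recorded by the diagrams of non-split sections computed in the earlier sections, so in practice the proof comes down to checking that those diagrams --- and the multiplication they induce on $\bigoplus_{i,j}\Hom(P_i,P_j)$ --- coincide under the chosen bijections of simple modules; for the Schur-algebra statement the extra point requiring care is that the matching of Young modules is compatible with the permutation-module structure, i.e.\ that the $2$-modular Kostka multiplicities $[M^\mu:Y^\lambda]$ agree on both sides.
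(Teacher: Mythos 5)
Your overall strategy---match the computed diagrams of the relevant indecomposables under the bijection of partitions and conclude that the basic algebras agree---is the same as the paper's, and your caveat that equal quivers, Cartan matrices and Loewy layers do not by themselves force an algebra isomorphism is a fair one: the paper's one-line proof leans on the fact that the Benson--Carlson diagrams record all non-split sections, and that the projectives here are so constrained (one is uniserial, the other has a heart splitting into two uniserial summands) that this data pins down the basic algebra; your appeal to the classification of algebras of dihedral type is a legitimate independent check, since both blocks do have defect group $D_8$.

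There is, however, a concrete error in the execution: the projective indecomposables of these blocks are not $Y^{[5]},Y^{[3,2]}$ and $Y^{[6,1]},Y^{[4,3]}$. With the labelling used in the paper, $Y^\lambda$ is projective exactly when $\lambda$ is column $2$-regular, and then $Y^\lambda\cong P(D^{\lambda'})$. In particular $Y^{[5]}=D^{[5]}$ and $Y^{[6,1]}=D^{[6,1]}$ are simple, and $Y^{[3,2]}$, $Y^{[4,3]}$ are the three-layer uniserial modules; none of these is projective. The projective covers in the principal block of $\FF_2S_5$ are $Y^{[2^2,1]}=P(D^{[3,2]})$ (uniserial of length $7$) and $Y^{[1^5]}=P(D^{[5]})$, which match $Y^{[2^3,1]}=P(D^{[4,3]})$ and $Y^{[2,1^5]}=P(D^{[6,1]})$ in the weight-$2$ block of $\FF_2S_7$. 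Computing $\End(Y^{[5]}\oplus Y^{[3,2]})$ would not give the basic algebra of the block, so the group-algebra half of your argument fails as written; with the correct projectives substituted it goes through and coincides with the paper's proof. The Schur-algebra half is unaffected by this slip, since there one compares all five Young modules in each block---exactly what the paper does---and your extra concern about matching the $2$-Kostka multiplicities is not needed, because the paper's $S_{\FF_2}(n,n)$ is by definition $\End_{\FF_2S_n}(\bigoplus_{\lambda}Y^\lambda)$ with each $Y^\lambda$ occurring exactly once.
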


By contrast, $\FF_2S_4$ (which consists of a single block, of weight 2) is not Morita equivalent to the principal block of $\FF_2S_5$ or the non-principal block of $\FF_2S_7$. At the same time, the Schur algebra $S_{\FF_2}(4,4)$ is not Morita equivalent to  the corresponding blocks of the Schur algebras $S_{\FF_2}(5,5)$ and $S_{\FF_2}(7,7)$. Furthermore, the principal blocks of $\FF_2S_6$ and $\FF_2S_7$ (of weight 3), as well as the corresponding blocks of $S_{\FF_2}(6,6)$ and $S_{\FF_2}(7,7)$ are not Morita equivalent. These inequivalences can be seen because the Cartan matrices are different, and this information is readily available in the literature.

\begin{proof}
The proofs depend on calculations done in later sections. 
The diagrams for the projective modules in the blocks mentioned of $\FF_2S_5$ and $\FF_2S_7$ have the same structure, with different simple modules appearing. This shows that these group algebra blocks are Morita equivalent. The fact that the diagrams for all the Young modules in these blocks have the same structure shows that the blocks of the Schur algebras are Morita equivalent.
\end{proof}

For standard notation and facts about group representations, such as terminology for radical and socle layers etc, we refer to \cite{Web1}. For background on the representation theory of the symmetric groups and Schur algebras, see \cite{Jam1}, \cite{JK}, \cite{Sag}, \cite{Gre1} and \cite{Mar}. For an overview of this theory, the exposition \cite{Erd1} is very useful.

\section{Young modules and Schur algebras}
In this section we summarize the part of  the theory of representations of the symmetric groups necessary for our calculations, thereby establishing our notation. 


For each partition $\lambda=(\lambda_1,\lambda_2,\ldots,\lambda_t)$ of $n$ the \textit{Young subgroup} $S_\lambda$ of $S_n$ is defined as 
$$
S_\lambda=S_{\lambda_1}\times S_{\lambda_2}\times \cdots\times S_{\lambda_t}.
$$
and the corresponding permutation $RS_n$-module on the left cosets of $S_\lambda$ is
$$M^\lambda = R[S_n/S_\lambda] =RS_n\otimes_{RS_\lambda} R,$$
which we refer to as a \textit{Young permutation module}. 
It has the \textit{Specht module} $S^\lambda$ as a submodule.

When $R$ is a field of characteristic zero, these Specht modules are irreducible, and they provide a complete set of non-isomorphic irreducible representations of $S_n$. 
When $R$ is a field of characteristic $p>0$ and $\lambda$ is \textit{$p$-regular}, meaning that $\lambda$ has at most $p-1$ parts of each size, James showed that $S^\lambda$ has a unique simple quotient $D^\lambda$, and these $D^\lambda$ form a complete list of isomorphism types of irreducible modules \cite{Jam1}. 

The indecomposable direct summands of the Young permuatation modules $M^\lambda$ were considered by James in \cite{Jam2}, and a little later Grabmeier~\cite{Gra} and Green~\cite{Gre2} referred to them as \textit{Young modules}. 
James showed (at least over an infinite field, but this restriction was removed by Green \cite{Gre2}) that
\begin{enumerate}
    \item the Young modules $Y^\lambda$ are parametrized up to isomorphism by the partitions $\lambda$ of $n$, in such a way that 
    \item the summands of $M^\lambda$ all have the form $Y^\mu$ for some $\mu\ge\lambda$ in the dominance ordering, and that 
    \item $Y^\lambda$ occurs as a direct summand of $M^\lambda$ with multiplicity 1. 
\end{enumerate}
\noindent These conditions imply
\begin{enumerate}
    \item[(4)] The $Y^\lambda$ are self-dual.
\end{enumerate}
We use these properties in constructing the $Y^\lambda$ and in analyzing their structure. Unlike the Specht modules, we are not able to identify $Y^\lambda$ as a specific subset of $M^\lambda$, and so in the non-projective cases we construct $Y^\lambda$ by decomposing $M^\lambda$ into direct summands and identifying the unique new summand that has not appeared in earlier $M^\mu$. The self-duality is a strong constraint that is very useful computationally.

The Young modules have a close connection with the \textit{Schur algebras}, for which we refer to \cite{Gre1} and \cite{Mar} as background.  
We are interested in the submodule structure of the indecomposable projective modules of Schur algebras, and for this it is sufficient to work with basic algebras, Morita equivalent to the Schur algebras. By abuse of a usual notation for Schur algebras, we define $S_R(n,r) =\End_{S_r}(\bigoplus_\lambda Y^\lambda)$ when $n\ge r$. For each $r$, the algebras defined in this way are all the same provided $n\ge r$, and their common value is  $S_R(r,r)$. Conventionally, $r$ is the subscript of the symmetric group in this context, but because we want to use $n$ as this symbol we prefer to write the algebra associated to $S_n$ as $S_R(n,n)$. This algebra is Morita equivalent to the usual Schur algebra with these parameters. 

Our definition of Schur algebras is unusual in another respect, in that we allow $R$ to be any field (or complete local ring). Often the Schur algebras are only defined over infinite fields. When $R$ is a prime field, it is already a splitting field for $S_R(n,n)$, by work of Green \cite{Gre2}, so that in determining the submodule structure of the projectives we might as well work over a prime field.

It was shown by Donkin that the Young modules have Specht filtrations, that is, chains of submodules whose factors are Specht modules (see, for instance, \cite{Mar}). Such filtrations may be obtained as the images under the Schur functor of Weyl filtrations of the indecomposable projective modules for $S_R(n,n)$. We indicate the Specht filtrations that arise in this way in our results on the Young modules.



\section{Diagrammatic descriptions of modules}
The diagrams we use to describe the structure of modules have a long history, and within the context of representations of algebras go back at least to the 1960s, if not before. We will use diagrams in the sense of Benson and Carlson~\cite{BC} and review what it means for a diagram to describe a module. An earlier approach to this kind of theory, applicable in slightly more restrictive circumstances, was given by Alperin~\cite{Alp}. A brief introduction to the kind of diagrams we shall use can be found on pages 1556--7 of \cite{Mor}.

Briefly, our module diagrams are directed graphs $X$ without loops or multiple edges, together with a labeling of the vertices by a fixed set of irreducible modules. In drawing these diagrams, the edges always point down. Each edge is labeled by a non-zero Ext class so that if the edge $x\xrightarrow{\alpha}y $ has source $x$ labeled by the irreducible $S$ and the target $y$ is labeled by the irreducible $T$, then $\alpha\in \Ext^1(S,T)$. 

To say what it means for such a diagram $X$ to describe a certain module $M$, we consider subsets of the set of vertices of $X$ that are closed under following along directed edges. For each such `arrow closed' subset $U$ of vertices there is a submodule $M_U$ associated to it, with the property if $U_1\supseteq U_2$ then $M_{U_1}\supseteq M_{U_2}$, and so that the composition factors of $M_{U_1}/M_{U_2}$ are the labels of the vertices in $U_1$ that are not in $U_2$. We require that $M_X=M$ and $M_\emptyset =0$, from which it follows that the labels of the vertices of $X$ are exactly the composition factors of $M$, taken with multiplicity and up to isomorphism. If $U_1\supseteq U_2$ differ by two vertices joined by an edge $x\xrightarrow{\alpha}y $ with $x$ labeled by $S$ and $y$ labeled by $T$ then the extension class of the short exact sequence
$$
0\to M_{U_2\cup\{T\}}/M_{U_2} \to  M_{U_1}/M_{U_2} \to  M_{U_1}/M_{U_2\cup\{T\}} \to 0
$$
is $\alpha$. If there is no edge between $x$ and $y$, the above extension must split. In our diagrams we do not label edges when $\dim\Ext(S,T)=1$, because over $\FF_2$ there is only one non-zero extension. Thus the existence of an edge simply means a non-split extension. 

The only case of an Ext group of dimension larger than 1 that we shall encounter is with the group $S_7$, where $\dim\Ext(D^{[5,2]},D^{[5,2]})=2$, and in this case we give the edges labels. We point out that for the Young module $Y^{[3,1^4]}$ for $S_7$, the situation arises where three copies of $D^{[5,2]}$ appear underneath a single copy of $D^{[5,2]}$, each edge labeled by a different non-zero element of the Ext group. It seems not to be possible in this case to have a diagram where the labels on edges emanating from a single vertex are linearly independent.



Part of the information conveyed by a diagram for a module $M$ is a description of its radical and socle layers, $\Rad^iM / \Rad^{i+r}M$ and $\Soc^{i+r}M/\Soc^{i}M$. To obtain this description, note that the radical of $M$ has a diagram obtained by removing all vertices that are not targets of arrows, and the socle of $M$ has as a diagram the vertices that are not sources of arrows (a discrete set of points). The powers of the radical and socle are obtained by iterating these combinatorial procedures.

As a special case of this, a module is semisimple if and only if it has a diagram with no edges. More generally, a module with a diagram that is a disjoint union of subdiagrams is the direct sum of the submodules with those diagrams.

At the other extreme, a diagram where the vertices are arranged in a single vertical line of edges represents a \textit{uniserial module}, namely, one with a unique composition series or, equivalently, where the submodules are linearly ordered by inclusion. This is so because a module is uniserial if and only if all its radical layers $\Rad^i M / \Rad^{i+1} M$ are simple, and this happens if and only if the diagram has a single line of edges. When drawing uniserial diagrams we often omit the directed arrows between the composition factors where this does not cause confusion. 



\section{Computational methods}
Our methods are heavily computational, using routines developed by the authors and others in GAP, and available from from Webb's internet site \cite{Web2}. At a basic level these routines provide functionality to compute with submodules and quotient modules of given representations. Further routines allow more elaborate operations. We highlight some of the key approaches.  Specific further detail is given in notes for each module we consider.

Our initial step is to construct the Young modules. These are summands of permutation modules and, when they are small enough, can be constructed by decomposing the permutation module into direct summands. The basic algorithm used to do this depends on Fitting's lemma, and finds a random endomorphism of the module that is not nilpotent and not an isomorphism. Raising this endomorphism to a high enough power, its kernel and image provide a direct sum decomposition of the module. This general approach is demanding on computational resources, but succeeds with modules whose dimensions are in the low hundreds.

With larger modules we use a different approach to direct sum decomposition that applies when the isomorphism types of some summands are already known. It works with the permutation modules $M^\lambda$ when the $\lambda$ are taken in dominance order, because each $M^\lambda$ has a unique new direct summand $Y^\lambda$ and all the other direct summands have already been constructed. The largest module $M^\lambda$ that we decomposed in this way has dimension 840. This approach may be new in the computational context, although the idea is very simple. Given a module and a candidate summand, random homomorphisms between these modules in both directions are tested until they happen to be found so that the composite from the summand to the bigger module, back to the summand is an isomorphism. These maps are then split mono and split epi, and express the candidate summand as a direct summand of the larger module. As a special case, this approach also provides a way to test for isomorphism of two modules, and find an isomorphism when they are isomorphic.

Many of the larger projective Young modules are constructed as summands of tensor products of suitably chosen smaller modules. We can test whether a summand of such a tensor product is projective by a routine that restricts to a Sylow $p$-subgroup and checks the rank of the sum of the group elements. If the module then has a unique simple quotient, it is the indecomposable projective associated to that module.

Once the Young modules have been constructed, a first approach to analyzing their structure is to compute the factors that appear in the Zassenhaus Butterfly Lemma, associated to two chains of submodules. We use the radical and socle series as input, as computed by the Meat-axe \cite{Par}. This gives a list of composition factors of the module, positioned according to the radical and socle layers in which they appear. The approach immediately identifies uniserial modules, and is a start for more complicated modules.

After this, a very useful routine allows us to remove from the bottom of a module all homomorphic images of another module, meaning that we quotient out the sum of the images of all homomorphisms from one module to another. A dual routine computes the intersection of the kernels of all homomorphisms from one module to another, allowing us to remove composition factors from the top of a module in a controlled way. These routines allow us to remove parts of a module with some precision so that we can examine what remains. By judiciously removing submodules and factor modules in this way, we can test for split extensions by direct sum decomposition and by examining radical and socle layers.

Certain kinds of modules appear repeatedly in our investigation, and are easily identified. The structure of uniserial modules is identified as already noted. It is often useful to work with the \textit{heart} of a module, which may be defined to be the quotient of its radical by its socle (in situations where the socle is contained in the radical). This is calculated by removing the socle and the top as just noted, and the structure of modules whose hearts are the direct sum of two uniserial modules can then be immediately verified. Such modules are called \textit{biserial}. We will use the term \textit{string module} to indicate a module with a diagram that is combinatorially a subdivision of a single line, folded so that the edges may go up or down. Such modules are identified by removing composition factors from the socle and the top in a judicious way and then showing that splitting occurs. Details are given of the precise procedure when this occurs.

Other computational strategies proceeded on an ad hoc basis, and are detailed in notes after each diagram.

\section{The module structure of Young modules in characteristic 2}


At the start of the tables for each symmetric group we give the table of multiplicities of the Young modules as summands of the Young permutation modules. These multiplicities are known as the \textit{$p$-Kostka numbers} (here $p=2$). They are already known, because they may be computed from the decomposition numbers for the Schur algebras given in \cite{Jam3} and \cite{Gra}. We present the table for the convenience of the reader, because the multiplicities are important in our construction of the Young modules, and we do not know of a reference where they are stated explicitly.




The factors in a Specht module filtration of the Young modules are indicated either by enclosing the factors for a single Specht module in a box, or by joining those factors by thick lines. We have chosen to use more than one way to indicate the Specht factors because of the difficulty of incorporating this information in diagrams that may already be quite complicated. Using thick lines is often more straightforward, but it is not always possible. When a Specht module is simple, there are no lines to thicken, and in these cases we enclose the Specht module in a box. The Specht modules $S^{[5,1^2]}$ and $S^{[3,1^4]}$ decompose as the direct sum of two simple modules, and so we enclose those modules in a box, again because there is no edge to thicken. In each situation we have chosen thick lines or boxes to produce the diagram that conveys the information most clearly, in our view. 

The Specht module factors in our Specht filtrations are computed by applying the Schur functor to the Weyl modules of the Schur algebra, which gives their composition factors and also the order in which they must appear, using the quasi-hereditary property of the Schur algebra. Under this construction, the Schur module $S^\lambda$ appears as the bottom factor in the Specht filtration of $Y^\lambda$, so that our diagrams include the structure of the Specht modules.

\subsection{Young modules for $S_1$}

\noindent Young module summand multiplicities:

\begin{center}
{\renewcommand{\arraystretch}{1.2}
\begin{tabular}{ |c|c||c| }
\hline
\multicolumn{2}{|c||}{Table of}&Permutation module\\
\cline{3-3}
\multicolumn{2}{|c||}{$2$-Kostka numbers}&$[1]$\\
\hline\hline
Young module & $[1]$ & 1 \\
\hline
\end{tabular}}
\end{center}
\noindent Young module structure:

\begin{center}
\begin{tabular}{ |c||c| }
\hline
$\lambda$&$[1]$\\
\hline\hline
$Y^\lambda$&
\begin{tikzpicture}[baseline=0pt]
\node[inner sep=0.15] (1) at  (0,0.125)  {$D^{[1]}$};
\node[draw=gray, fit = (1), inner sep=1] {};
\node () at (0,.275) {};
\node () at (0,-.025) {};
\end{tikzpicture}

\\
\hline
\end{tabular}
\end{center}

\subsection{Young modules for $S_2$}

\noindent Young module summand multiplicities:
\begin{center}
{\renewcommand{\arraystretch}{1.2}
\begin{tabular}{ |c|c||c|c| }
\hline
\multicolumn{2}{|c||}{Table of}&\multicolumn{2}{|c|}{Permutation module}\\
\cline{3-4}
\multicolumn{2}{|c||}{$2$-Kostka numbers}&$[2]$&$[1^2]$\\
\hline\hline
\multirow{2}{3em}{Young module} & $[2]$ & 1 & 0 \\
&$[1^2]$ & 0 & 1 \\
\hline
\end{tabular}}
\end{center}
\noindent Young module structure:

\begin{center}
\begin{tabular}{ |c||c|c| }
\hline
$\lambda$&$[2]$&$[1^2]$\\
\hline\hline
$Y^\lambda$&
\begin{tikzpicture}[baseline=0pt]
\node[inner sep=0.15] (1) at  (0,.175)  {$D^{[2]}$};
\node[draw=gray, fit = (1), inner sep=1] {};
\end{tikzpicture}
&
\begin{tikzpicture}[baseline=0pt]
\node[inner sep=0.15] (1) at  (0,.45)  {$D^{[2]}$};
\node[inner sep=0.15] (2) at  (0,-.05)   {$D^{[2]}$};
\node[draw=gray, fit = (1), inner sep=.5] {};
\node[draw=gray, fit = (2), inner sep=.5] {};
\node () at (0,.6) {};
\node () at (0,-.2) {};
\end{tikzpicture}
\\
\hline
\end{tabular}
\end{center}

\subsection{Young modules for $S_3$}

\noindent Young module summand multiplicities:
\begin{center}
{\renewcommand{\arraystretch}{1.2}
\begin{tabular}{ |c|c||c|c|c| }
\hline
\multicolumn{2}{|c||}{Table of}&\multicolumn{3}{|c|}{Permutation module}\\
\cline{3-5}
\multicolumn{2}{|c||}{$2$-Kostka numbers}&$[3]$&$[2,1]$&$[1^2]$\\
\hline\hline
\multirow{3}{3em}{Young module} & $[3]$ & 1 & 1 & 0 \\
&$[2,1]$ & 0 & 1 & 2 \\
&$[1^3]$ & 0 & 0 & 1 \\
\hline
\end{tabular}}
\end{center}

\noindent Young module structure:
\begin{center}
\begin{tabular}{ |c||c|c|c| }
\hline
$\lambda$&$[3]$&$[2,1]$&$[1^3]$\\
\hline\hline
$Y^\lambda$&
\begin{tikzpicture}[baseline=0pt]
\node[inner sep=0.5] (1) at  (0,.175)  {$D^{[3]}$};
\node[draw=gray, fit = (1), inner sep=1] {};
\end{tikzpicture}
&
\begin{tikzpicture}[baseline=0pt]
\node[inner sep=0.5] (1) at  (0,.175)  {$D^{[2,1]}$};
\node[draw=gray, fit = (1), inner sep=1] {};
\end{tikzpicture}&
\begin{tikzpicture}[baseline=0pt]
\node[inner sep=0.15] (1) at  (0,.4)  {$D^{[3]}$};
\node[inner sep=0.15] (2) at  (0,-.1)   {$D^{[3]}$};
\node[draw=gray, fit = (1), inner sep=.5] {};
\node[draw=gray, fit = (2), inner sep=.5] {};
\node () at (0,.55) {};
\node () at (0,-.25) {};
\end{tikzpicture}

\\
\hline
\end{tabular}
\end{center}

\subsection{Young modules for $S_4$}

\noindent Young module summand multiplicities:
\begin{center}
{\renewcommand{\arraystretch}{1.2}
\begin{tabular}{ |c|c||c|c|c|c|c| }
\hline
\multicolumn{2}{|c||}{Table of}&\multicolumn{5}{|c|}{Permutation module}\\
\cline{3-7}
\multicolumn{2}{|c||}{$2$-Kostka numbers}&$[4]$&$[3,1]$&$[2^2]$&$[2,1^2]$&$[1^4]$\\
\hline\hline
\multirow{5}{3em}{Young module} & $[4]$ & 1 & 0 & 0 & 0 & 0\\
&$[3,1]$ & 0 & 1 & 0 & 1 & 0\\
&$[2^2]$ & 0 & 0 & 1 & 0 & 0 \\
&$[2,1^2]$ & 0 & 0 & 0 & 1 & 2 \\
&$[1^4]$ & 0 & 0 & 0 & 0 & 1 \\
\hline
\end{tabular}}
\end{center}

\noindent Young module structure:

\begin{center}
\begin{tabular}{ |c||c|c|c|c|c| }
\hline
$\lambda$&$[4]$&$[3,1]$&$[2^2]$&$[2,1^2]$& $[1^4]$\\
\hline\hline
$Y^\lambda$&
\begin{tikzpicture}[baseline=0pt]
\node[inner sep=0.5] (1) at  (0,.175)  {$D^{[4]}$};
\node[draw=gray, fit = (1), inner sep=1] {};
\end{tikzpicture}&

\begin{tikzpicture}[baseline=0pt]
\node[inner ysep=0.15] (1) at  (0,.8)  {$D^{[4]}$};
\node[inner sep=0.15] (2) at  (0,0.2)   {$D^{[3,1]}$};
\node[inner sep=0.15] (3) at  (0,-.4)  {$D^{[4]}$};
\node[draw=gray, fit = (1), inner sep=.5] {};
\node[draw=gray, fit = (2) (3), inner sep=.5] {};
\end{tikzpicture}

&
\begin{tikzpicture}[baseline=0pt]
\node[inner xsep=0.15, inner ysep=1] (1) at (0,.75) {${D^{[4]}}$};
\node[inner xsep=0.15, inner ysep=1] (2) at (.7,-.25) {${D^{[3,1]}}$};
\node[inner xsep=0.15, inner ysep=1] (3) at (1.4,.75) {${D^{[3,1]}}$};
\node[inner xsep=0.15, inner ysep=1] (4) at (2.1,-.25) {${D^{[4]}}$};

\begin{pgfonlayer}{bg}
\draw[ultra thick] (1) -- (2);
\draw (2) -- (3);
\draw[ultra thick] (3) -- (4);
\end{pgfonlayer}
\end{tikzpicture}
&

\begin{tikzpicture}[baseline=0pt]
\node[inner xsep=0.15, inner ysep=1] (1) at  (0,1.45)  {$D^{[3,1]}$};
\node[inner xsep=0.15, inner ysep=1] (l1) at  (-.75,.65)   {$D^{[4]}$};
\node[inner xsep=0.15, inner ysep=1, draw=gray] (r) at  (.75,0.25)   {$D^{[3,1]}$};
\node[inner xsep=0.15, inner ysep=1] (l2) at  (-.75,-.15)   {$D^{[4]}$};
\node[inner xsep=0.15, inner ysep=1] (bot) at  (0,-.95)   {$D^{[3,1]}$};
\node () at (0,1.6) {};
\node () at (0,-1.1) {};

\begin{pgfonlayer}{bg}
\draw[ultra thick] (1) -- (l1);
\draw (l1) -- (l2);
\draw[ultra thick] (l2) -- (bot);
\draw (1) -- (r) -- (bot);
\end{pgfonlayer}
\end{tikzpicture}

&

\begin{tikzpicture}[baseline=0pt]
\node[inner sep=0.15] (top) at  (0,1.4)  {$D^{[4]}$};
\node[inner sep=0.15] (l1) at  (-.7,.7)   {$D^{[4]}$};
\node[inner sep=0.15] (r1) at  (.7,.7)   {$D^{[3,1]}$};
\node[inner sep=0.15] (l2) at  (-.7,0)   {$D^{[3,1]}$};
\node[inner sep=0.15] (r2) at  (.7,0)   {$D^{[4]}$};
\node[inner sep=0.15] (bot) at  (0,-.7)   {$D^{[4]}$};
\node[inner sep=0.15] at  (0,.25)   {$\oplus$};
\node[draw=gray, fit = (top), inner sep=1] {};
\node[draw=gray, fit = (l1) (l2), inner sep=1] {};
\node[draw=gray, fit = (bot), inner sep=1] {};
\node[draw=gray, fit = (r1) (r2), inner sep=1] {};
\node () at (0,1.15) {};
\node () at (0,-.65) {};
\end{tikzpicture}
\\
\hline
\end{tabular}
\end{center}

\textbf{Notes on $Y^{[2^2]}$.} We compute that the socle has the form $D^{[3,1]}\oplus D^{[4]}$. Quotienting out the socle $D^{[4]}$  leaves an indecomposable module, while quotienting out instead $D^{[3,1]}$ leaves two uniserial summands. The structure of these summands implies the diagram shown.

\subsection{Young modules for $S_5$}

\noindent Young module summand multiplicities:
\begin{center}
{\renewcommand{\arraystretch}{1.2}
\begin{tabular}{ |c|c||c|c|c|c|c|c|c| }
\hline
\multicolumn{2}{|c||}{Table of}&\multicolumn{7}{|c|}{Permutation module}\\
\cline{3-9}
\multicolumn{2}{|c||}{$2$-Kostka numbers}&$[5]$&$[4,1]$&$[3,2]$&$[3,1^2]$&
$[2^2,1]$&$[2,1^3]$&
$[1^5]$\\
\hline\hline
\multirow{7}{3em}{Young module} & $[5]$ & 1 & 1 & 0 & 0 & 0 & 0 & 0\\
&$[4,1]$ & 0 & 1 & 1 & 2 & 2 & 2 & 0\\
&$[3,2]$ & 0 & 0 & 1 & 0 & 1 & 0 & 0\\
&$[3,1^2]$ & 0 & 0 & 0 & 1 & 0 & 1 & 0\\
&$[2^2,1]$ & 0 & 0 & 0 & 0 & 1 & 2 & 4\\
&$[2,1^3]$ & 0 & 0 & 0 & 0 & 0 & 1 & 4\\
&$[1^5]$ & 0 & 0 & 0 & 0 & 0 & 0 & 1\\
\hline
\end{tabular}}
\end{center}
\noindent Young module structure:

\begin{center}
\begin{tabular}{ |c||c|c|c|c|c|c|c| }
\hline
$\lambda$&$[5]$&$[4,1]$&$[3,2]$&$[3,1^2]$\\
\hline\hline
$Y^\lambda$&
\begin{tikzpicture}[baseline=0pt]
\node[inner sep=0.5] (1) at  (0,.175)  {$D^{[5]}$};
\node[draw=gray, fit = (1), inner sep=1] {};
\end{tikzpicture}
&
\begin{tikzpicture}[baseline=0pt]
\node[inner sep=0.5] (1) at  (0,.175)  {$D^{[4,1]}$};
\node[draw=gray, fit = (1), inner sep=1] {};
\end{tikzpicture}&
\begin{tikzpicture}[baseline=0pt]
\node[inner ysep=0.15] (1) at  (0,.5)  {$D^{[5]}$};
\node[inner sep=0.15] (2) at  (0,0)   {$D^{[3,2]}$};
\node[inner sep=0.15] (3) at  (0,-.5)  {$D^{[5]}$};
\node[draw=gray, fit = (1), inner sep=.5] {};
\node[draw=gray, fit = (2) (3), inner sep=.5] {};
\end{tikzpicture}

&
\begin{tikzpicture}[baseline=0pt]
\node[inner ysep=0.15] (1) at  (0,1.25)  {$D^{[5]}$};
\node[inner sep=0.15] (2) at  (0,.75)   {$D^{[3,2]}$};
\node[inner sep=0.15] (3) at  (0,.25)  {$D^{[5]}$};
\node[inner ysep=0.15] (4) at  (0,-.25)  {$D^{[5]}$};
\node[inner ysep=0.15] (5) at  (0,-.75)   {$D^{[3,2]}$};
\node[inner ysep=0.15] (6) at  (0,-1.25)  {$D^{[5]}$};
\node[draw=gray, fit = (1), inner sep=.25] {};
\node[draw=gray, fit = (2) (3), inner sep=.5] {};
\node[draw=gray, fit = (4) (6), inner sep=.5] {};
\node () at (0,1.4) {};
\node () at (0,-1.4) {};
\end{tikzpicture}

\\
\hline
\end{tabular}
\end{center}

\begin{center}
\begin{tabular}{ |c||c|c|c| }
\hline
$\lambda$&$[2^2,1]$&$[2,1^3]$&$[1^5]$\\
\hline\hline
$Y^\lambda$&
\begin{tikzpicture}[baseline=0pt]
\node[inner sep=0.15] (1) at  (0,1.5)  {$D^{[3,2]}$};
\node[inner sep=0.15] (2) at  (0,1)   {$D^{[5]}$};
\node[inner ysep=0.15] (3) at  (0,.5)  {$D^{[5]}$};
\node[inner ysep=0.15] (4) at  (0,0)   {$D^{[3,2]}$};
\node[inner ysep=0.15] (5) at  (0,-.5)   {$D^{[5]}$};
\node[inner sep=0.15] (6) at  (0,-1)  {$D^{[5]}$};
\node[inner sep=0.15] (7) at  (0,-1.5)   {$D^{[3,2]}$};
\node[draw=gray, fit = (1) (2), inner sep=.5] {};
\node[draw=gray, fit = (3) (5), inner sep=.5] {};
\node[draw=gray, fit = (6) (7), inner sep=.5] {};
\node () at (0,1.65) {};
\node () at (0,-1.65) {};
\end{tikzpicture}

&

\begin{tikzpicture}[baseline=0pt]
\node[inner sep=0.15] (1) at  (0,.25)  {$D^{[4,1]}$};
\node[inner sep=0.15] (2) at  (0,-.25)   {$D^{[4,1]}$};
\node[draw=gray, fit = (1), inner sep=.5] {};
\node[draw=gray, fit = (2), inner sep=.5] {};
\node () at (0,.4) {};
\node () at (0,-.4) {};
\end{tikzpicture}

&
\begin{tikzpicture}[baseline=0pt]

\node[inner sep=0.15] (top) at  (0,1.5)   {$D^{[5]}$};
\node[inner sep=0.15] (bot) at  (0,-1.5)   {$D^{[5]}$};
\node[inner sep=0.15] (plus) at  (0,0)   {$\oplus$};
\node[inner ysep=.15] (l1) at  (-1,1)  {$D^{[5]}$};
\node[inner sep=0.15] (l2) at  (-1,.5)   {$D^{[3,2]}$};
\node[inner sep=0.15] (l3) at  (-1,0)   {$D^{[5]}$};
\node[inner sep=0.15] (l4) at  (-1,-.5)   {$D^{[5]}$};
\node[inner sep=0.15] (l5) at  (-1,-1)  {$D^{[3,2]}$};
\node[inner sep=0.15] (r1) at  (1,1)  {$D^{[3,2]}$};
\node[inner sep=0.15] (r2) at  (1,.5)   {$D^{[5]}$};
\node[inner sep=0.15] (r3) at  (1,0)   {$D^{[5]}$};
\node[inner sep=0.15] (r4) at  (1,-.5)   {$D^{[3,2]}$};
\node[inner ysep=.15] (r5) at  (1,-1)  {$D^{[5]}$};
\node[draw=gray, fit = (top), inner sep=1] {};
\node[draw=gray, fit = (bot), inner sep=1] {};
\node[draw=gray, fit = (l1) (l3), inner sep=.5] {};
\node[draw=gray, fit = (r1) (r2), inner sep=.5] {};
\node[draw=gray, fit = (l4) (l5), inner sep=.5] {};
\node[draw=gray, fit = (r3) (r5), inner sep=.5] {};
\node () at (0,1.65) {};
\node () at (0,-1.65) {};
\end{tikzpicture}

\\
\hline
\end{tabular}
\end{center}

\subsection{Young modules for $S_6$}

\noindent Young module summand multiplicities:
\begin{center}
{\renewcommand{\arraystretch}{1.2}
\scalebox{0.75}{
\begin{tabular}{ |c|c||c|c|c|c|c|c|c|c|c|c|c| }
\hline
\multicolumn{2}{|c||}{Table of}&\multicolumn{11}{|c|}{Permutation module}\\
\cline{3-13}
\multicolumn{2}{|c||}{$2$-Kostka numbers}&$[6]$&$[5,1]$&$[4,2]$&$[4,1^2]$&
$[3^2]$&$[3,2,1]$&$[3,1^3]$&$[2^3]$&$[2^2,1^2]$&
$[2,1^4]$&$[1^6]$\\
\hline\hline
\multirow{11}{3em}{Young module} & $[6]$ & 1 & 0 & 1 & 0 & 0 & 0 & 0 & 0 & 0 & 0 & 0\\
&$[5,1]$ & 0 & 1 & 0 & 1 & 0 & 0 & 0 & 0 & 0 & 0 & 0\\
&$[4,2]$ & 0 & 0 & 1 & 0 & 0 & 0 & 0 & 2 & 0 & 0 & 0\\
&$[4,1^2]$ & 0 & 0 & 0 & 1 & 0 & 1 & 2 & 0 & 2 & 2 & 0\\
&$[3^2]$ & 0 & 0 & 0 & 0 & 1 & 1 & 0 & 0 & 1 & 0 & 0\\
&$[3,2,1]$ & 0 & 0 & 0 & 0 & 0 & 1 & 2 & 2 & 4 & 8 & 16\\
&$[3,1^3]$ & 0 & 0 & 0 & 0 & 0 & 0 & 1 & 0 & 0 & 1 & 0\\
&$[2^3]$ & 0 & 0 & 0 & 0 & 0 & 0 & 0 & 1 & 0 & 0 & 0\\
&$[2^2,1^2]$ & 0 & 0 & 0 & 0 & 0 & 0 & 0 & 0 & 1 & 2 & 4\\
&$[2,1^4]$ & 0 & 0 & 0 & 0 & 0 & 0 & 0 & 0 & 0 & 1 & 4\\
&$[1^6]$ & 0 & 0 & 0 & 0 & 0 & 0 & 0 & 0 & 0 & 0 & 1\\
\hline
\end{tabular}}}
\end{center}
\noindent Young module structure:

\begin{center}
\begin{tabular}{ |c||c|c|c|c|c| }
\hline
$\lambda$&$[6]$&$[5,1]$&$[4,2]$&$[4,1^2]$&$[3^2]$\\
\hline\hline
$Y^\lambda$&
\begin{tikzpicture}[baseline=0pt]
\node[inner sep=0.5] (1) at  (0,.175)  {$D^{[6]}$};
\node[draw=gray, fit = (1), inner sep=1] {};
\end{tikzpicture} &
\begin{tikzpicture}[baseline=0pt]

\node[inner ysep=0.15] (1) at  (0,.5)   {$D^{[6]}$};
\node[inner sep=0.15] (2) at  (0,0)   {$D^{[5,1]}$};
\node[inner sep=0.15] (3) at  (0,-.5)  {$D^{[6]}$};
\node[draw=gray, fit = (1), inner sep=.5] {};
\node[draw=gray, fit = (2) (3), inner sep=.5] {};

\end{tikzpicture}

&
\begin{tikzpicture}[baseline=0pt]

\node[inner sep=0.15] (1) at  (0,1)   {$D^{[5,1]}$};
\node[inner sep=0.15] (2) at  (0,.5)   {$D^{[6]}$};
\node[inner sep=0.15] (3) at  (0,0)  {$D^{[4,2]}$};
\node[inner sep=0.15] (4) at  (0,-.5)   {$D^{[6]}$};
\node[inner sep=0.15] (5) at  (0,-1)   {$D^{[5,1]}$};
\node[draw=gray, fit = (1) (2), inner sep=.5] {};
\node[draw=gray, fit = (3) (5), inner sep=.5] {};

\end{tikzpicture}

&
\begin{tikzpicture}[baseline=0pt]

\node[inner sep=0.15] (1) at  (0,2)   {$D^{[5,1]}$};
\node[inner sep=0.15] (2) at  (0,1.5)   {$D^{[6]}$};
\node[inner sep=0.15] (3) at  (0,1)  {$D^{[4,2]}$};
\node[inner sep=0.15] (4) at  (0,.5)   {$D^{[6]}$};
\node[inner sep=0.15] (5) at  (0,0)   {$D^{[5,1]}$};
\node[inner sep=0.15] (6) at  (0,-.5)   {$D^{[6]}$};
\node[inner sep=0.15] (7) at  (0,-1)  {$D^{[4,2]}$};
\node[inner sep=0.15] (8) at  (0,-1.5)   {$D^{[6]}$};
\node[inner sep=0.15] (9) at  (0,-2)   {$D^{[5,1]}$};

\node[draw=gray, fit = (1) (2), inner sep=.5] {};
\node[draw=gray, fit = (3) (5), inner sep=.5] {};
\node[draw=gray, fit = (6) (9), inner sep=.5] {};

\end{tikzpicture}
&

\begin{tikzpicture}[baseline=0pt]

\node[inner xsep=0.15] (5) at  (1,-2)  {$D^{[6]}$};
\node[inner xsep=0.15] (41) at  (0,-1)   {$D^{[5,1]}$};
\node[inner xsep=0.15] (31) at  (0,0)   {$D^{[6]}$};
\node[inner xsep=0.15] (21) at  (0,1)   {$D^{[4,2]}$};
\node[draw=gray, inner sep=0.5] (1) at  (1,2)   {$D^{[6]}$};
\node[inner xsep=0.15] (22) at  (2,1)   {$D^{[5,1]}$};
\node[inner xsep=0.15] (32) at  (2,0)   {$D^{[6]}$};
\node[inner xsep=0.15] (42) at  (2,-1)   {$D^{[4,2]}$};
\node () at (1,2.15) {};

\begin{pgfonlayer}{bg}
\draw[ultra thick] (21)--(31)--(41);
\draw (41) --(5);
\draw[ultra thick] (5)--(42);
\draw (42)--(32);
\draw[ultra thick](32)--(22);
\draw (22)--(1)--(21) -- (42);
\end{pgfonlayer}
\end{tikzpicture}
\\
\hline
\end{tabular}
\end{center}

\textbf{Notes on $Y^{[3^2]}$.} The socle of the heart of $Y^{[3^2]}$ is $D^{[5,1]}\oplus D^{[4,2]}$. Quotienting out $D^{[4,2]}$ from the heart leaves two uniserial summands with composition factors shown, while the result of quotienting out successively $D^{[5,1]}$ and $D^{[6]}$ remains indecomposable. Now, the fact that the heart is self dual implies that the heart is a string module in the manner shown.

\begin{center}
\begin{tabular}{ |c||c|c|c| }
\hline
$\lambda$&$[3,2,1]$&$[3,1^3]$&$[2^3]$\\
\hline\hline
$Y^\lambda$ &
\begin{tikzpicture}[baseline=0pt]
\node[inner sep=0.5] (1) at  (0,.175)  {$D^{[3,2,1]}$};
\node[draw=gray, fit = (1), inner sep=1] {};
\end{tikzpicture}
&
\begin{tikzpicture}[baseline=0pt]
\node[inner xsep=0.15] (9) at  (0,-4)  {$D^{[6]}$};
\node[inner xsep=0.15] (81) at  (-1,-3)   {$D^{[5,1]}$};
\node[inner xsep=0.15] (71) at  (-1,-2)   {$D^{[6]}$};
\node[inner xsep=0.15] (61) at  (-1,-1)   {$D^{[4,2]}$};
\node[inner xsep=0.15] (51) at  (-1,0)   {$D^{[6]}$};
\node[inner xsep=0.15] (41) at  (-1,1)   {$D^{[5,1]}$};
\node[inner xsep=0.15] (31) at  (-1,2)   {$D^{[6]}$};
\node[inner xsep=0.15] (21) at  (-1,3)   {$D^{[4,2]}$};
\node[draw=gray, inner sep=0.5] (1) at  (0,4)  {$D^{[6]}$};
\node[inner xsep=0.15] (82) at  (1,-3)   {$D^{[4,2]}$};
\node[inner xsep=0.15] (72) at  (1,-2)   {$D^{[6]}$};
\node[inner xsep=0.15] (62) at  (1,-1)   {$D^{[5,1]}$};
\node[inner xsep=0.15] (52) at  (1,0)   {$D^{[6]}$};
\node[inner xsep=0.15] (42) at  (1,1)   {$D^{[4,2]}$};
\node[inner xsep=0.15] (32) at  (1,2)   {$D^{[6]}$};
\node[inner xsep=0.15] (22) at  (1,3)   {$D^{[5,1]}$};

\begin{pgfonlayer}{bg}
\draw (1) -- (22);
\draw[ultra thick] (22)-- (32);
\draw (32)-- (42);
\draw[ultra thick] (42)-- (52);
\draw (52)-- (62);
\draw[ultra thick] (62)-- (72) -- (82) -- (9);
\draw (9)-- (81);
\draw[ultra thick] (81)-- (71) -- (61) -- (51);
\draw (51)-- (41);
\draw[ultra thick] (41)-- (31) -- (21);
\draw (21)-- (1);
\draw (21)--(42);
\draw (31)--(52);
\draw (41)--(62);
\draw (51)--(72);
\draw (61)--(82);
\end{pgfonlayer}
\end{tikzpicture}

&

\begin{tikzpicture}[baseline=0pt]

\node[inner xsep=0.15] (9) at  (0,-4)  {$D^{[4,2]}$};
\node[inner xsep=0.15] (8) at  (0,-3)   {$D^{[6]}$};
\node[inner xsep=0.15] (7) at  (0,-2)   {$D^{[5,1]}$};
\node[inner xsep=0.15] (6) at  (0,-1)   {$D^{[6]}$};
\node[inner xsep=0.15] (5) at  (0,0)   {$D^{[4,2]}$};
\node[inner xsep=0.15] (4) at  (0,1)   {$D^{[6]}$};
\node[inner xsep=0.15] (3) at  (0,2)   {$D^{[5,1]}$};
\node[inner xsep=0.15] (2) at  (0,3)   {$D^{[6]}$};
\node[inner xsep=0.15] (1) at  (0,4)  {$D^{[4,2]}$};
\node[inner xsep=0.15] (22) at  (3,0)   {$D^{[4,2]}$};
\node[draw=gray, inner sep=0.5] (21) at  (3,4)   {$D^{[6]}$};
\node[inner xsep=0.15] (23) at  (3,-4)   {$D^{[6]}$};

\begin{pgfonlayer}{bg}
\draw[ultra thick] (1)--(2)--(3);
\draw (3)--(4);
\draw[ultra thick] (4)--(5)--(6)--(7);
\draw (7)--(8);
\draw[ultra thick](8)--(9);
\draw (21)--(22);
\draw[ultra thick](22)--(23);
\draw (1)--(22)--(9);
\draw (21)--(0.125,-3);
\draw (23)--(0.125,3);
\draw[white,fill=white] (2) circle [radius=0.3];
\draw[white,fill=white] (8) circle [radius=0.3];
\end{pgfonlayer}
\end{tikzpicture}
\\
\hline
\end{tabular}
\end{center}

\textbf{Notes on $Y^{[3,1^3]}$.} We first verify that each section of the form $\Rad^i Y^{[3,1^3]} / \Rad^{i+3} Y^{[3,1^3]}$ for  $1\le i\le 5$ is a string module as shown. These modules are all indecomposable. When $i=3$,  quotienting out $D^{[4,2]}$ and then $D^{[6]}$ yields an indecomposable module, while quotienting out instead $D^{[5,1]}$ causes the module to decompose as the direct sum of two uniserial modules. The cases for $i=1,5$ are similar. When $i=2,4$, it is more delicate to remove the $D^{[6]}$ modules correctly: where $i=2$ for example, we may work with the larger module $\Rad^2 Y^{[3,1^3]} / \Rad^6 Y^{[3,1^3]}$, and remove $D^{[5,1]}$ and then $D^{[6]}$.

Additionally, there is a homomorphism $Y^{[4,1^2]}\to Y^{[3,1^3]}$ whose image is uniserial of length 8, and whose cokernel is also uniserial of length 8, with composition factors as shown. Therefore the only edges between the two uniserial strands of the heart go from left to right as shown.

\textbf{Notes on $Y^{[2^3]}$.} 
The module remains indecomposable after removing the top and bottom copies of $D^{[6]}$ on the right, so some edges must connect to the $D^{[4,2]}$ on the right. Additionally, the quotient of the radical of $Y^{[2^3]}$ by its socle decomposes into two uniserial summands. This implies the existence of the edges between copies of $D^{[4,2]}$.

Now, removing only the top and bottom copies of $D^{[4,2]}$ from $Y^{[2^3]}$ leaves an indecomposable module, so some edges must connect to the copies of $D^{[6]}$ on the right. However, if we further quotient out both copies of $D^{[6]}$ from the socle of this module, the new module does decompose. Therefore the edges between copies of $D^{[6]}$ must be as shown.

\begin{center}
\begin{tabular}{ |c||c|c| }
\hline
$\lambda$&$[2^2,1^2]$&$[2,1^4]$\\
\hline\hline
$Y^\lambda$ 
&
\begin{tikzpicture}[baseline=0pt]
\node[inner xsep=0.15] (10) at  (0,-4.5)  {$D^{[4,2]}$};
\node[inner xsep=0.15] (91) at  (-1.5,-3.5)   {$D^{[4,2]}$};
\node[inner xsep=0.15] (81) at  (-1.5,-2.5)   {$D^{[6]}$};
\node[inner xsep=0.15] (71) at  (-1.5,-1.5)   {$D^{[5,1]}$};
\node[inner xsep=0.15] (61) at  (-1.5,-.5)   {$D^{[6]}$};
\node[inner xsep=0.15] (51) at  (-1.5,.5)   {$D^{[4,2]}$};
\node[inner xsep=0.15] (41) at  (-1.5,1.5)   {$D^{[6]}$};
\node[inner xsep=0.15] (31) at  (-1.5,2.5)   {$D^{[5,1]}$};
\node[inner xsep=0.15] (21) at  (-1.5,3.5)   {$D^{[6]}$};
\node[inner xsep=0.15] (1) at  (0,4.5)  {$D^{[4,2]}$};
\node[inner xsep=0.15] (22) at  (1.5,3.5)   {$D^{[4,2]}$};
\node[inner xsep=0.15] (32) at  (1.5,2.5)   {$D^{[6]}$};
\node[inner xsep=0.15] (42) at  (1.5,1.5)   {$D^{[5,1]}$};
\node[inner xsep=0.15] (52) at  (1.5,.5)   {$D^{[6]}$};
\node[inner xsep=0.15] (62) at  (1.5,-.5)   {$D^{[4,2]}$};
\node[inner xsep=0.15] (72) at  (1.5,-1.5)   {$D^{[6]}$};
\node[inner xsep=0.15] (82) at  (1.5,-2.5)   {$D^{[5,1]}$};
\node[inner xsep=0.15] (92) at  (1.5,-3.5)   {$D^{[6]}$};

\begin{pgfonlayer}{bg}
    \draw[ultra thick] (1) -- (21) -- (31);
    \draw (31)-- (41);
    \draw[ultra thick](41)-- (51) -- (61) -- (71);
    \draw (71)-- (81);
    \draw[ultra thick](81)-- (91);
    \draw(91)-- (10);
    \draw[ultra thick](10)-- (92) -- (82);
    \draw (82)-- (72);
    \draw[ultra thick] (72) -- (62) -- (52) -- (42);
    \draw (42)-- (32);
    \draw[ultra thick] (32)-- (22);
    \draw (22)-- (1);
    \draw (22) -- (91);
    \draw (21) -- (32);
    \draw (31)--(42);
    \draw (41)--(52);
    \draw (51)--(62);
    \draw (61)--(72);
    \draw (71)--(82);
    \draw (81)--(92);
\end{pgfonlayer}
\end{tikzpicture}

&

\begin{tikzpicture}[baseline=0pt]

\node[inner xsep=0.15] (10) at  (0,-4.5)  {$D^{[5,1]}$};
\node[inner xsep=0.15] (91) at  (-1,-3.5)   {$D^{[5,1]}$};
\node[inner xsep=0.15] (81) at  (-1,-2.5)   {$D^{[6]}$};
\node[inner xsep=0.15] (71) at  (-1,-1.5)   {$D^{[4,2]}$};
\node[inner xsep=0.15] (61) at  (-1,-.5)   {$D^{[6]}$};
\node[inner xsep=0.15] (51) at  (-1,.5)   {$D^{[5,1]}$};
\node[inner xsep=0.15] (41) at  (-1,1.5)   {$D^{[6]}$};
\node[inner xsep=0.15] (31) at  (-1,2.5)   {$D^{[4,2]}$};
\node[inner xsep=0.15] (21) at  (-1,3.5)   {$D^{[6]}$};
\node[inner xsep=0.15] (1) at  (0,4.5)  {$D^{[5,1]}$};
\node[inner xsep=0.15] (22) at  (1,3.5)   {$D^{[5,1]}$};
\node[inner xsep=0.15] (32) at  (1,2.5)   {$D^{[6]}$};
\node[inner xsep=0.15] (42) at  (1,1.5)   {$D^{[4,2]}$};
\node[inner xsep=0.15] (52) at  (1,.5)   {$D^{[6]}$};
\node[inner xsep=0.15] (62) at  (1,-.5)   {$D^{[5,1]}$};
\node[inner xsep=0.15] (72) at  (1,-1.5)   {$D^{[6]}$};
\node[inner xsep=0.15] (82) at  (1,-2.5)   {$D^{[4,2]}$};
\node[inner xsep=0.15] (92) at  (1,-3.5)   {$D^{[6]}$};

\begin{pgfonlayer}{bg}
    \draw[ultra thick] (1) -- (21);
    \draw (21)-- (31);
    \draw[ultra thick] (31)-- (41) -- (51);
    \draw (51)-- (61);
    \draw[ultra thick] (61)-- (71) -- (81) -- (91);
    \draw (91)-- (10);
    \draw[ultra thick] (10)-- (92);
    \draw (92)-- (82);
    \draw[ultra thick] (82)-- (72) -- (62);
    \draw (62)-- (52);
    \draw[ultra thick] (52)-- (42) -- (32) -- (22);
    \draw (22) -- (1);
    \draw (21) -- (32);
    \draw (31)--(42);
    \draw (41)--(52);
    \draw (51)--(62);
    \draw (61)--(72);
    \draw (71)--(82);
    \draw (81)--(92);
\end{pgfonlayer}
\end{tikzpicture}
\\
\hline
\end{tabular}
\end{center}

\textbf{Notes on $Y^{[2^2,1^2]}$.} This module is the projective cover of $D^{[4,2]}$ because $[2^2,1^2]$ and $[4,2]$ are conjugate partitions. There is a homomorphism from $Y^{[2^3]}$ to $Y^{[2^2,1^2]}$ having kernel the socle $D^{[6]}$ of $Y^{[2^3]}$. The quotient of $Y^{[2^2,1^2]}$ by the image of this homomorphism is uniserial. This implies the right side of the diagram and the diagonal line, and that the only remaining edges are in the left uniserial quotient, and from that quotient down to the right. We now calculate with the sections $\Rad^iY^{[2^2,1^2]} / \Rad^{i+2}Y^{[2^2,1^2]}$ where $1\le i\le 7$, and show that they are all string modules by testing indecomposability of the quotients by the simple modules in their socles.  


\textbf{Notes on $Y^{[2,1^4]}$.} This module is constructed as a tensor product $Y^{[4,1^2]}\otimes U$ where $U$ is a 2-dimensional uniserial module whose composition factors are two copies of the trivial module $D^{[6]}$. Such a module $U$ may be constructed as a section of $Y^{[3,1^3]}$, or of $Y^{[2^3]}$. The tensor product is verified to be projective and indecomposable with top $D^{[5,1]}$. We deduce that it is the projective cover of $D^{[5,1]}$. This is also a description of $Y^{[2,1^4]}$ because the partition $[2,1^4]$ is column 2-regular, and so the Young module is the projective cover of the simple indexed by the conjugate partition (see \cite{Erd1}). Because the tensor product of the short exact sequence $0\to D^{[6]}\to U\to D^{[6]}\to 0$ with $Y^{[4,1^2]}$ is exact, it follows that $Y^{[2,1^4]}$ has a submodule isomorphic to the uniserial module $Y^{[4,1^2]}$, with quotient also isomorphic to this module. This establishes the outer edges in the diagram, and that the only remaining edges go from the quotient to the submodule. Finally, examining 2-step layers $\Rad^i Y^{[2,1^4]} / \Rad^{i+2} Y^{[2,1^4]}$ confirms that they are string modules as shown (see notes on $Y^{[2^2]}$, $Y^{[2^2,1^2]}$, and $Y^{[2,1^4]}$ for methods).

\begin{center}
\begin{tabular}{ |c||c| }
\hline
$\lambda$&$[1^6]$\\
\hline\hline
$Y^\lambda$ &
\begin{tikzpicture}[baseline=0pt]
\node[inner xsep=0.15] (53) at (1,0.5) {$D^{[6]}$};
\node[inner xsep=0.15] (52) at (-1,0.5) {$D^{[6]}$};
\node[inner xsep=0.15] (43) at (1,1.5) {$D^{[4,2]}$};
\node[inner xsep=0.15] (42) at (-1,1.5) {$D^{[5,1]}$};
\node[inner xsep=0.15] (33) at (1,2.5) {$D^{[6]}$};
\node[inner xsep=0.15] (32) at (-1,2.5) {$D^{[6]}$};
\node[inner xsep=0.15] (23) at (1,3.5) {$D^{[5,1]}$};
\node[inner xsep=0.15] (21) at (-1,3.5) {$D^{[4,2]}$};
\node[inner xsep=0.15] (63) at (1,-.5) {$D^{[5,1]}$};
\node[inner xsep=0.15] (62) at (-1,-.5) {$D^{[4,2]}$};
\node[inner xsep=0.15] (73) at (1,-1.5) {$D^{[6]}$};
\node[inner xsep=0.15] (72) at (-1,-1.5) {$D^{[6]}$};
\node[inner xsep=0.15] (83) at (1,-2.5) {$D^{[4,2]}$};
\node[inner xsep=0.15] (82) at (-1,-2.5) {$D^{[5,1]}$};

\node[inner xsep=0.15] (92) at (0,-3.5) {$D^{[6]}$};
\node[inner xsep=0.15] (93) at (2.5,-3.5) {$D^{[4,2]}$};
\node[inner xsep=0.15] (91) at (-2.5,-3.5) {$D^{[5,1]}$};
\node[draw=gray, inner sep=0.5] (101) at (0,-4.5) {$D^{[6]}$};
\node[inner xsep=0.15] (22) at (0,3.5) {$D^{[6]}$};
\node[draw=gray, inner sep=0.5] (11) at (0,4.5) {$D^{[6]}$};

\node[inner xsep=0.15] (54) at (2.5,0.5) {$D^{[4,2]}$};
\node[inner xsep=0.15] (51) at (-2.5,0.5) {$D^{[5,1]}$};
\node[inner xsep=0.15] (44) at (2.5,1.5) {$D^{[6]}$};
\node[inner xsep=0.15] (41) at (-2.5,1.5) {$D^{[6]}$};
\node[inner xsep=0.15] (34) at (2.5,2.5) {$D^{[5,1]}$};
\node[inner xsep=0.15] (31) at (-2.5,2.5) {$D^{[4,2]}$};
\node[inner xsep=0.15] (64) at (2.5,-.5) {$D^{[6]}$};
\node[inner xsep=0.15] (61) at (-2.5,-.5) {$D^{[6]}$};
\node[inner xsep=0.15] (74) at (2.5,-1.5) {$D^{[5,1]}$};
\node[inner xsep=0.15] (71) at (-2.5,-1.5) {$D^{[4,2]}$};
\node[inner xsep=0.15] (84) at (2.5,-2.5) {$D^{[6]}$};
\node[inner xsep=0.15] (81) at (-2.5,-2.5) {$D^{[6]}$};

\node () at (0,4.65) {};
\node () at (0,-4.65) {};

\begin{pgfonlayer}{bg}
    \draw (11) -- (21);
    \draw[ultra thick] (21)-- (32) -- (42);
    \draw (42)-- (52);
    \draw[ultra thick] (52)-- (62) -- (72) -- (82);
    \draw (82)-- (92) -- (101);
    \draw (11) -- (23);
    \draw[ultra thick] (23)-- (33);
    \draw (33)-- (43);
    \draw[ultra thick] (43)-- (53);
    \draw (53)-- (63);
    \draw[ultra thick] (63)-- (73) -- (83) -- (92);
    \draw (11) -- (22);
    \draw[ultra thick] (22) -- (31) -- (41) -- (51);
    \draw (51)-- (61);
    \draw[ultra thick] (61)-- (71);
    \draw (71)-- (81);
    \draw[ultra thick] (81)-- (91);
    \draw (91)-- (101);
    \draw (22) -- (34);
    \draw[ultra thick] (34) -- (44) -- (54) -- (64);
    \draw (64)-- (74);
    \draw[ultra thick] (74)-- (84) -- (93);
    \draw (93)-- (101);
    \draw (21) -- (31);
    \draw (32) -- (41);
    \draw (42) -- (51);
    \draw (52) -- (61);
    \draw (62) -- (71);
    \draw (72) -- (81);
    \draw (82) -- (91);
    \draw (23) -- (34);
    \draw (33) -- (44);
    \draw (43) -- (54);
    \draw (53) -- (64);
    \draw (63) -- (74);
    \draw (73) -- (84);
    \draw (83) -- (93);
\end{pgfonlayer}

\end{tikzpicture}

\\
\hline
\end{tabular}
\end{center}

\textbf{Notes on $Y^{[1^6]}$.} This module is the projective cover of $D^{[6]}$ (because $[1^6]$ and $[6]$ are conjugate partitions). It may be constructed as the tensor product $Y^{[3,1^3]}\otimes U$ where $U$ is the 2-dimensional uniserial module with two copies of $D^{[6]}$ as composition factors, used in the constructon of $Y^{[2,1^4]}$. This tensor product is verified to be projective, it has a copy of $D^{[6]}$ in its top layer, and it has the correct dimension, known from the Cartan matrix. 

The quotient $\Rad^2 Y^{[1^6]} / \Rad^8 Y^{[1^6]}$ decomposes as indicated by the diagram. We consider the 2-step layers of each of its two direct summands, and conclude that those sections are string modules as shown (see notes on $Y^{[2^2]}$, $Y^{[2^2,1^2]}$, and $Y^{[2,1^4]}$ for methods). There can be no more cross-diagonal edges within these summands because of the structures of $Y^{[2^2,1^2]}$ and $Y^{[2,1^4]}$, which are the projective covers of $D^{[4,2]}$ and $D^{[5,1]}$.
Finally, examination of the section  $\Rad^7 Y^{[1^6]} / \Rad^9 Y^{[1^6]}$ confirms that it is a string as shown.
The remaining edges in the diagram are established by duality.

\subsection{Young modules for $S_7$}
For this group we find in the upcoming calculations that there is a simple module admitting more than one non-split self-extension: $\dim\Ext_{\FF_2S_7}^1(D^{[5,2]},D^{[5,2]})=2$. Because we are working over $\FF_2$ there are exactly three non-split extensions, and we are able to distinguish between them as follows. One non-split extension occurs in the middle of the Young module $Y^{[5,1^2]}$, and we denote this by a dashed line. Another arises in the module $D^{[5,2]}\otimes U$, where $U$ is the 2-dimensional uniserial module with trivial composition factors, and we denote this with a dotted line. The third non-split extension, which is the sum of the other two, we denote by a dash-dotted line.

\noindent Young module summand multiplicities:

\begin{center}
{\renewcommand{\arraystretch}{1.2}
\scalebox{0.53}{
\begin{tabular}{ |c|c||c|c|c|c|c|c|c|c|c|c|c|c|c|c|c| }
\hline
\multicolumn{2}{|c||}{Table of}&\multicolumn{15}{|c|}{Permutation module}\\
\cline{3-17}
\multicolumn{2}{|c||}{$2$-Kostka numbers}&$[7]$&$[6,1]$&$[5,2]$&$[5,1^2]$&
$[4,3]$&$[4,2,1]$&$[4,1^3]$&$[3^2,1]$&$[3,2^2]$&$[3,2,1^2]$&$[3,1^4]$&$[2^3,1]$&$[2^2,1^3]$&$[2,1^5]$&$[1^7]$\\
\hline\hline
\multirow{15}{3em}{Young module} & $[7]$ & 1 & 1 & 1 & 0 & 1 & 1 & 0 & 0& 0 & 0 & 0 & 0 & 0 & 0 & 0\\
&$[6,1]$ & 0 & 1 & 1 & 2 & 0 & 1 & 2 & 0& 0 & 0 & 0 & 0 & 0 & 0 & 0 \\
&$[5,2]$ & 0 & 0 & 1 & 0 & 1 & 1 & 0 & 0 & 2 & 0 & 0 & 2 & 0 & 0 & 0 \\
&$[5,1^2]$ & 0 & 0 & 0 & 1 & 0 & 0 & 1 & 0& 0 & 0 & 0 & 0 & 0 & 0 & 0 \\
&$[4,3]$ & 0 & 0 & 0 & 0 & 1 & 1 & 0 & 2& 2 & 2 & 0 & 3 & 2 & 0 & 0 \\
&$[4,2,1]$ & 0 & 0 & 0 & 0 & 0 & 1 & 2 & 0& 1 & 2 & 4 & 3 & 4 & 4 & 0 \\
&$[4,1^3]$ & 0 & 0 & 0 & 0 & 0 & 0 & 1 & 0& 0 & 1 & 4 & 0 & 2 & 4 & 0 \\
&$[3^2,1]$ & 0 & 0 & 0 & 0 & 0 & 0 & 0 & 1& 0 & 1 & 0 & 0 & 1 & 0 & 0\\
&$[3,2^2]$ & 0 & 0 & 0 & 0 & 0 & 0 & 0 & 0& 1 & 0 & 0 & 1 & 0 & 0 & 0\\
&$[3,2,1^2]$ & 0 & 0 & 0 & 0 & 0 & 0 & 0 & 0& 0 & 1 & 2 & 2 & 5 & 10 & 20\\
&$[3,1^4]$ & 0 & 0 & 0 & 0 & 0 & 0 & 0 & 0& 0 & 0 & 1 & 0 & 0 & 1 & 0\\
&$[2^3,1]$ & 0 & 0 & 0 & 0 & 0 & 0 & 0 & 0& 0 & 0 & 0 & 1 & 2 & 4 & 8\\
&$[2^2,1^3]$ & 0 & 0 & 0 & 0 & 0 & 0 & 0 & 0& 0 & 0 & 0 & 0 & 1 & 4 & 14\\
&$[2,1^5]$ & 0 & 0 & 0 & 0 & 0 & 0 & 0 & 0& 0 & 0 & 0 & 0 & 0 & 1 & 6\\
&$[1^7]$ & 0 & 0 & 0 & 0 & 0 & 0 & 0 & 0& 0 & 0 & 0 & 0 & 0 & 0 & 1\\
\hline
\end{tabular}}}
\end{center}

\noindent Young module structure: 
\begin{center}
\begin{tabular}{ |c||c|c|c|c|c| }
\hline
$\lambda$&$[7]$&$[6,1]$&$[5,2]$&$[5,1^2]$&$[4,3]$\\
\hline\hline
$Y^\lambda$&
\begin{tikzpicture}[baseline=0pt]
\node[inner sep=0.5] (1) at  (0,.175)  {$D^{[7]}$};
\node[draw=gray, fit = (1), inner sep=1] {};
\end{tikzpicture}&
\begin{tikzpicture}[baseline=0pt]
\node[inner sep=0.5] (1) at  (0,.175)  {$D^{[6,1]}$};
\node[draw=gray, fit = (1), inner sep=1] {};
\end{tikzpicture}&
\begin{tikzpicture}[baseline=0pt]
\node[inner sep=0.5] (1) at  (0,.175)  {$D^{[5,2]}$};
\node[draw=gray, fit = (1), inner sep=1] {};
\end{tikzpicture}
&
\begin{tikzpicture}[baseline=0pt,circ/.style={circle, draw=black, fill=white, ultra thick, inner xsep=0.01,minimum size=5mm}]

\node[draw=gray, inner sep=0.5] (1) at  (-.5,.5)   {$D^{[7]}$};
\node[draw=gray, inner sep=0.5] (2) at  (.5,.5)   {$D^{[5,2]}$};
\node[inner sep=0.15] (3) at  (-.5,-.5)  {$D^{[5,2]}$};
\node[inner sep=0.15] (4) at  (.5,-.5)   {$D^{[7]}$};
\node () at (0,.65) {};
\node[draw=gray, fit = (3) (4), inner sep=1] {};

\begin{pgfonlayer}{bg}
    \draw[dashed] (3)--(2);
    \draw (2)--(4)--(1)--(3);
\end{pgfonlayer}
\end{tikzpicture}

&

\begin{tikzpicture}[baseline=0pt]

\node[inner sep=0.15] (1) at  (0,.5)   {$D^{[6,1]}$};
\node[inner sep=0.15] (2) at  (0,0)   {$D^{[4,3]}$};
\node[inner sep=0.15] (3) at  (0,-.5)  {$D^{[6,1]}$};
\node[draw=gray, fit = (1), inner sep=.5] {};
\node[draw=gray, fit = (2) (3), inner sep=.5] {};

\end{tikzpicture}
\\[7mm]
\hline
\end{tabular}
\end{center}

\textbf{Notes on $Y^{[5,1^2]}$.} Quotienting out the socle $D^{[7]}$ yields an indecomposable module; instead quotienting out $D^{[5,2]}$ also yields an indecomposable module. Therefore the diagonal edges are as shown, and the radical and socle series require the diagram to be as shown.

\begin{center}
\begin{tabular}{ |c||c|c|c| }
\hline
$\lambda$&$[4,2,1]$&$[4,1^3]$&$[3^2,1]$\\
\hline\hline
$Y^\lambda$&
\begin{tikzpicture}[baseline=0pt]
\node[draw=gray, inner sep=0.5] (1) at (0,2) {$D^{[5,2]}$};
\node (2) at (-1,1) {$D^{[7]}$};
\node[inner xsep=0.15] (3) at (-1,0) {$D^{[4,2,1]}$};
\node[inner xsep=0.15] (4) at (-1,-1) {$D^{[7]}$};
\node[inner xsep=0.15] (5) at (0,-2) {$D^{[5,2]}$};
\node (6) at (1,0) {$D^{[5,2]}$};
\node () at (0,2.15) {};
\draw[gray] (-1.6,.9)--(-1.2,1.65)--(1.67,.2)--(1.25,-.6)--(-1.6,.9);

\begin{pgfonlayer}{bg}
    \draw (1) -- (2) -- (3);
    \draw[ultra thick] (3)-- (4) -- (5);
    \draw[dashed] (1) -- (6) -- (5);
\end{pgfonlayer}

\end{tikzpicture}&

\begin{tikzpicture}[baseline=0pt]
\node[draw=gray, inner sep=0.5] (1) at  (0,1.25)   {$D^{[6,1]}$};
\node[inner sep=0.15] (2) at  (0,.75)   {$D^{[4,3]}$};
\node[inner sep=0.15] (3) at  (0,.25)  {$D^{[6,1]}$};
\node[inner sep=0.15] (4) at  (0,-.25)   {$D^{[6,1]}$};
\node[inner sep=0.15] (5) at  (0,-.75)   {$D^{[4,3]}$};
\node[inner sep=0.15] (6) at  (0,-1.25)  {$D^{[6,1]}$};
\node[draw=gray, fit = (2) (3), inner sep=.5] {};
\node[draw=gray, fit = (4) (5) (6), inner sep=.5] {};
\node () at (0,1.4) {};
\node () at (0,-1.4) {};
\end{tikzpicture}

&

\begin{tikzpicture}[baseline=0pt]
\node[draw=gray, inner sep=0.5] (l1) at (-2,1.75) {$D^{[7]}$};
\node[inner xsep=0.15] (l2) at (-2,.75) {$D^{[4,2,1]}$};
\node[inner xsep=0.15] (l3) at (-2,-.75) {$D^{[7]}$};
\node[inner xsep=0.15] (l4) at (-2,-1.75) {$D^{[5,2]}$};
\node[draw=gray, inner sep=0.5] (c1) at (0,1.75) {$D^{[5,2]}$};
\node[inner xsep=0.15] (c2) at (0,-1.75) {$D^{[5,2]}$};
\node[draw=gray, inner sep=0.5] (r1) at (2,1.75) {$D^{[5,2]}$};
\node[inner xsep=0.15] (r2) at (2,.75) {$D^{[7]}$};
\node[inner xsep=0.15] (r3) at (2,-.75) {$D^{[4,2,1]}$};
\node[inner xsep=0.15] (r4) at (2,-1.75) {$D^{[7]}$};

\draw[gray] (0.05,-2.4)--(2.6,.9)--(1.9,1.439)--(-.65,-1.861)--(.05,-2.4);

\begin{pgfonlayer}{bg}
    \draw (l1) -- (l2);
    \draw[ultra thick] (l2)-- (l3) -- (l4);
    \draw (r1) -- (r2) -- (r3);
    \draw[ultra thick] (r3)-- (r4);
    \draw (l1) -- (r2);
    \draw (l2) -- (r3);
    \draw (l3) -- (r4);
    \draw (l1) -- (c2);
    \draw (c1) -- (r4);
    \draw[dashed] (c2) -- (r1);
    \draw[dashed] (l4) -- (c1);
\end{pgfonlayer}

\end{tikzpicture}
\\
\hline
\end{tabular}
\end{center}

\textbf{Notes on $Y^{[4,2,1]}$.} The module decomposes once the top and socle copies of $D^{[5,2]}$ are removed.

\textbf{Notes on $Y^{[4,1^3]}$.} This module is the projective cover of $D^{[6,1]}$. It is a direct summand of the tensor product $Y^{[5,1^2]} \otimes D^{[6,1]}$.

\textbf{Notes on $Y^{[3^2,1]}$.} The module does not decompose when both socle copies of $D^{[5,2]}$ are removed, but $Y^{[3^2,1]} /  \Soc Y^{[3^2,1]}$ has the one remaining middle copy of $D^{[5,2]}$ as a direct summand. Therefore that copy of $D^{[5,2]}$ must connect to the socle $D^{[7]}$ of $Y^{[3^2,1]}$. The edges between copies of $D^{[4,2,1]}$ and $D^{[7]}$, as well as all vertical edges in the diagram, may be established by examining quotients of the form $\Rad^i Y^{[3^2,1]} / \Rad^{i+2} Y^{[3^2,1]}$ for $1 \leq i \leq 3$, which are all strings as shown in the diagram.

To demonstrate the absence of an edge between center copies of $D^{[5,2]}$, as well as the existence of the cross-diagonal edges between copies of $D^{[5,2]}$, we remove copies of $D^{[7]}$ from the top and socle of $Y^{[3^2,1]}$. The remaining module is indecomposable, but quotienting out the socle copy of $D^{[4,2,1]}$ causes it to decompose as indicated by the diagram.

\begin{center}
\begin{tabular}{ |c||c|c| }
\hline
$\lambda$&$[3,2^2]$&$[3,2,1^2]$\\
\hline\hline
$Y^\lambda$&
\begin{tikzpicture}[baseline=0pt]
\node[inner xsep=0.15] (l1) at (-1,2) {$D^{[4,2,1]}$};
\node[inner xsep=0.15] (l2) at (-1,1) {$D^{[7]}$};
\node[inner xsep=0.15] (l3) at (-1,0) {$D^{[5,2]}$};
\node[inner xsep=0.15] (l4) at (-1,-1) {$D^{[7]}$};
\node[inner xsep=0.15] (l5) at (-1,-2) {$D^{[4,2,1]}$};
\node[draw=gray, inner sep=0.5] (r1) at (1,2) {$D^{[7]}$};
\node[inner xsep=0.15] (r2) at (1,0) {$D^{[4,2,1]}$};
\node[inner xsep=0.15] (r3) at (1,-2) {$D^{[7]}$};

\begin{pgfonlayer}{bg}
    \draw[ultra thick] (l1) -- (l2) -- (l3);
    \draw (l3)-- (l4);
    \draw[ultra thick] (l4)--(l5);
    \draw (r1) -- (r2);
    \draw[ultra thick] (r2)-- (r3);
    \draw (l1) -- (r2);
    \draw (l2) -- (r3);
    \draw (l4) -- (r1);
    \draw (l5) -- (r2);
\end{pgfonlayer}

\end{tikzpicture}

&

\begin{tikzpicture}[baseline=0pt]
\node[inner xsep=0.15] (top) at (0,2.5) {$D^{[4,2,1]}$};
\node[inner xsep=0.15] (l1) at (-1,1.5) {$D^{[7]}$};
\node[inner xsep=0.15] (l2) at (-1,.5) {$D^{[5,2]}$};
\node[inner xsep=0.15] (l3) at (-1,-.5) {$D^{[7]}$};
\node[inner xsep=0.15] (l4) at (-1,-1.5) {$D^{[4,2,1]}$};
\node[inner xsep=0.15] (r1) at (1,1.5) {$D^{[4,2,1]}$};
\node[inner xsep=0.15] (r2) at (1,.5) {$D^{[7]}$};
\node[inner xsep=0.15] (r3) at (1,-.5) {$D^{[5,2]}$};
\node[inner xsep=0.15] (r4) at (1,-1.5) {$D^{[7]}$};
\node[inner xsep=0.15] (bot) at (0,-2.5) {$D^{[4,2,1]}$};

\begin{pgfonlayer}{bg}
    \draw[ultra thick] (top)--(l1)--(l2);
    \draw (l2)--(l3);
    \draw[ultra thick] (l3)--(l4);
    \draw (l4)--(bot);
    \draw[ultra thick] (bot)--(r4)--(r3);
    \draw (r3)--(r2);
    \draw[ultra thick] (r2)--(r1);
    \draw (r1)--(top);
    \draw (l4)--(r1);
    \draw (l1)--(r2);
    \draw[thick,dotted] (l2)--(r3);
    \draw (l3)--(r4);
\end{pgfonlayer}

\end{tikzpicture}
\\
\hline
\end{tabular}
\end{center}

\textbf{Notes on $Y^{[3,2^2]}$.} When $D^{[4,2,1]}$, both copies of $D^{[7]}$, and $D^{[5,2]}$ are sequentially removed from the top of $Y^{[3,2^2]}$, the remaining module is a string as shown. Instead removing $D^{[7]}$ and then both copies of $D^{[4,2,1]}$ from the top of $Y^{[3,2^2]}$ yields an indecomposable module; further removing the top-left $D^{[7]}$ from this new module causes it to decompose, implying the existence of the edges between copies of $D^{[7]}$.

\textbf{Notes on $Y^{[3,2,1^2]}$.} This module is the projective cover of $D^{[4,2,1]}$. To confirm its structure, we first examine quotients of the form $\Rad^i Y^{[3,2,1^2]}$ $/\Rad^{i+2} Y^{[3,2,1^2]}$, which are all strings as shown in the diagram. We then isolate the second-highest and second-lowest copies of $D^{[4,2,1]}$ by removing all other simple modules from the top and bottom of $Y^{[3,2,1^2]}$; the radical series of this new module demonstrates that the two copies of $D^{[4,2,1]}$ are connected by the cross-diagonal edge shown.

\begin{center}
\begin{tabular}{ |c||c| }
\hline
$\lambda$&$[3,1^4]$\\
\hline\hline
$Y^\lambda$&
\begin{tikzpicture}[baseline=0pt]
\node () at (0,2.9) {};
\node[draw=gray, inner sep=0.5] (11) at (2.5,2.75) {$D^{[5,2]}$};
\node[draw=gray, inner sep=0.5] (12) at (1,2.75) {$D^{[7]}$};
\node[draw=gray, inner sep=0.5] (13) at (-2.5,2.75) {$D^{[5,2]}$};
\node[inner xsep=0.15] (21) at (2.5,1.75) {$D^{[7]}$};
\node[inner xsep=0.15] (22) at (1,1.75) {$D^{[4,2,1]}$};
\node[inner xsep=0.15] (23) at (-1,1.75) {$D^{[5,2]}$};
\node[inner xsep=0.15] (24) at (-2.5,1.75) {$D^{[7]}$};
\node[inner xsep=0.15] (31) at (4,0) {$D^{[5,2]}$};
\node[inner xsep=0.15] (32) at (2.5,0) {$D^{[4,2,1]}$};
\node[inner xsep=0.15] (33) at (1,0) {$D^{[7]}$};
\node[inner xsep=0.15] (34) at (-1,0) {$D^{[7]}$};
\node[inner xsep=0.15] (35) at (-2.5,0) {$D^{[4,2,1]}$};
\node[inner xsep=0.15] (36) at (-4,0) {$D^{[5,2]}$};
\node[inner xsep=0.15] (41) at (2.5,-1.75) {$D^{[7]}$};
\node[inner xsep=0.15] (42) at (1,-1.75) {$D^{[5,2]}$};
\node[inner xsep=0.15] (43) at (-1,-1.75) {$D^{[4,2,1]}$};
\node[inner xsep=0.15] (44) at (-2.5,-1.75) {$D^{[7]}$};
\node[inner xsep=0.15] (51) at (2.5,-2.75) {$D^{[5,2]}$};
\node[inner xsep=0.15] (52) at (-1,-2.75) {$D^{[7]}$};
\node[inner xsep=0.15] (53) at (-2.5,-2.75) {$D^{[5,2]}$};
\node[inner xsep=0.15] () at (0,-3.1) {};

\draw[gray] (-4.7,0.03)--(-3.93,-.675)--(-1.85,1.8)--(-2.65,2.47)--(-4.7,0.03);

\draw[gray] (4.7,0.03)--(3.93,-.675)--(1.85,1.8)--(2.65,2.47)--(4.7,0.03);

\begin{pgfonlayer}{bg}
    \draw[dashed] (51)--(31)--(11);
    \draw[dashed] (53)--(36)--(13);
    \draw (11)--(21)--(32);
    \draw[ultra thick] (32)--(41);
    \draw (41)--(51);
    \draw[thick,dotted] (51)--(42);
    \draw[ultra thick] (42)--(33)--(22);
    \draw (22)--(12)--(23);
    \draw[ultra thick] (23)--(34)--(43);
    \draw (43)--(52)--(44);
    \draw (13)--(24)--(35);
    \draw[ultra thick] (35)--(44)--(53);
    \draw (21)--(12);
    \draw (32)--(22);
    \draw (41)--(33);
    \draw[thick,dotted] (23)--(13);
    \draw (34)--(24);
    \draw (43)--(35);
    \draw (42)--(52);
    \draw[dash dot dot] (31)--(13);
    \draw[dash dot dot] (36)--(51);
    \node[draw=gray, fit = (51) (52), inner sep=.5] {};
\end{pgfonlayer}

\end{tikzpicture}
\\
\hline
\end{tabular}
\end{center}

\textbf{Notes on $Y^{[3,1^4]}$.} Quotients of the form $\Rad^i Y^{[3,1^4]} / \Rad^{i+2} Y^{[3,1^4]}$ for $i=1, 2$ are easily confirmed to be direct sums of strings, and the structure of $\Rad Y^{[3,1^4]} / \Rad^4 Y^{[3,1^4]}$ follows from the way it decomposes.

We next examine $Y^{[3,1^4]} / \Soc^3 Y^{[3,1^4]}$, which is indecomposable. Removing both copies of $D^{[7]}$ from the bottom of this module yields the direct sum indicated by the diagram, and demonstrates that the bottom copy of $D^{[5,2]}$ is generated by one top copy of $D^{[5,2]}$ and one top copy of $D^{[7]}$. Instead, removing that bottom copy of $D^{[5,2]}$ causes the module to decompose into two direct summands as shown; therefore that $D^{[5,2]}$ must be generated by one simple module from each of those summands. These manipulations imply that the entire quotient module has the structure shown.

Finally, we investigate the edges that connect to the outer copies of $D^{[5,2]}$ on the left and right. We first construct a module $U$ with four composition factors $D^{[5,2]}$: the top two copies of $D^{[5,2]}$ and the two outer copies of $D^{[5,2]}$.
(This can be done by first constructing a uniserial module with composition factors $D^{[5,2]},D^{[7]},D^{[4,2,1]},D^{[7]}$ from $Y^{[4,2,1]}$. We quotient this uniserial module out of $Y^{[3,1^4]}$, and then remove excess simple modules from the module that results.) 
The module $U$ is indecomposable; this implies the existence of the outer edges from the top copies of $D^{[5,2]}$ to the outer copies, as well as either one or two cross-edges. Examination of $Y^{[3,1^4]} / \Soc^3 Y^{[3,1^4]}$, which decomposes after $D^{[7]}$ is removed from its top, confirms that there can be no other edges to the outer copies of $D^{[5,2]}$.

To examine the cross-edges, we next construct from $Y^{[5,1^2]}$ a uniserial module with two composition factors $D^{[5,2]}$, quotient this module out from $Y^{[3,1^4]}$, and remove excess simple modules until we are left with a module $V$ that has four composition factors $D^{[5,2]}$. These must be the top two copies of $D^{[5,2]}$, the middle copy of $D^{[5,2]}$, and one of the outer copies of $D^{[5,2]}$. To determine which outer copy, we note that $V$ decomposes as $(D^{[5,2]}/(D^{[5,2]}\oplus D^{[5,2]})) \oplus D^{[5,2]}$. If $V$ contained the left outer $D^{[5,2]}$ without a cross-edge, it would have decomposed as $(D^{[5,2]}/D^{[5,2]})\oplus (D^{[5,2]}/ D^{[5,2]})$; and if $V$ contained either outer $D^{[5,2]}$ with a cross-edge, it would have been a string and would not have decomposed. The given decomposition therefore implies that $V$ contains the right outer copy of $D^{[5,2]}$ with no cross-edge. Since $U$ is indecomposable, the other cross-edge must exist (between the outer left and top right copies of $D^{[5,2]}$), so $U$ is a string as shown.

\begin{center}
\begin{tabular}{ |c||c|c|c| }
\hline
$\lambda$&$[2^3,1]$&$[2^2,1^3]$&$[2,1^5]$\\
\hline\hline
$Y^\lambda$&
\begin{tikzpicture}[baseline=0pt]
\node[inner sep=0.15] (1) at (0,1.5) {$D^{[4,3]}$};
\node[inner sep=0.15] (2) at (0,1) {$D^{[6,1]}$};
\node[inner sep=0.15] (3) at (0,.5) {$D^{[6,1]}$};
\node[inner sep=0.15] (4) at (0,0) {$D^{[4,3]}$};
\node[inner sep=0.15] (5) at (0,-.5) {$D^{[6,1]}$};
\node[inner sep=0.15] (6) at (0,-1) {$D^{[6,1]}$};
\node[inner sep=0.15] (7) at (0,-1.5) {$D^{[4,3]}$};
\node[draw=gray, fit = (1) (2), inner sep=.5] {};
\node[draw=gray, fit = (3) (4) (5), inner sep=.5] {};
\node[draw=gray, fit = (6) (7), inner sep=.5] {};
\node () at (0,1.65) {};
\node () at (0,-1.65) {};
\end{tikzpicture}

&

\begin{tikzpicture}[baseline=0pt]
\node[draw=gray, inner sep=0.5] (top) at (0,2.5) {$D^{[5,2]}$};
\node[inner xsep=0.15] (l1) at (-2,1.5) {$D^{[7]}$};
\node[inner xsep=0.15] (l2) at (-2,.5) {$D^{[4,2,1]}$};
\node[inner xsep=0.15] (l3) at (-2,-.5) {$D^{[7]}$};
\node[inner xsep=0.15] (l4) at (-2,-1.5) {$D^{[5,2]}$};
\node[inner xsep=0.15] (r1) at (2,1.5) {$D^{[5,2]}$};
\node[inner xsep=0.15] (r2) at (2,.5) {$D^{[7]}$};
\node[inner xsep=0.15] (r3) at (2,-.5) {$D^{[4,2,1]}$};
\node[inner xsep=0.15] (r4) at (2,-1.5) {$D^{[7]}$};
\node[inner xsep=0.15] (c1) at (0,1.5) {$D^{[5,2]}$};
\node[inner xsep=0.15] (c2) at (0,-1.5) {$D^{[5,2]}$};
\node[draw=gray, inner sep=0.5] (bot) at (0,-2.5) {$D^{[5,2]}$};
\begin{pgfonlayer}{bg}
    \draw (top) -- (l1) -- (l2);
    \draw[ultra thick] (l2)-- (l3) -- (l4);
    \draw[thick,dotted] (l4)-- (bot);
    \draw (bot)-- (r4) -- (r3);
    \draw[ultra thick] (r3)-- (r2) -- (r1);
    \draw[thick,dotted] (r1) -- (top);
    \draw (l1) -- (r2);
    \draw (l2) -- (r3);
    \draw (l3) -- (r4);
    \draw[dashed] (top) -- (c1);
    \draw[dotted,thick] (c1)-- (c2);
    \draw[dashed] (c2)-- (bot);
    \draw[dashed] (c1)-- (l4);
    \draw[dashed] (c2) -- (r1);
    \node[draw=gray, fit = (c2) (r4), inner sep=.5] {};
    \node[draw=gray, fit = (l1) (c1), inner sep=.5] {};
\end{pgfonlayer}
\end{tikzpicture}

&

\begin{tikzpicture}[baseline=0pt]
\node[inner sep=0.15] (top) at (0,1.5) {$D^{[6,1]}$};
\node[inner sep=0.15] (l1) at (-1,1) {$D^{[4,3]}$};
\node[inner sep=0.15] (l2) at (-1,.5) {$D^{[6,1]}$};
\node[inner sep=0.15] (l3) at (-1,0) {$D^{[6,1]}$};
\node[inner sep=0.15] (l4) at (-1,-.5) {$D^{[4,3]}$};
\node[inner sep=0.15] (l5) at (-1,-1) {$D^{[6,1]}$};
\node[inner sep=0.15] (r1) at (1,1) {$D^{[6,1]}$};
\node[inner sep=0.15] (r2) at (1,.5) {$D^{[4,3]}$};
\node[inner sep=0.15] (r3) at (1,0) {$D^{[6,1]}$};
\node[inner sep=0.15] (r4) at (1,-.5) {$D^{[6,1]}$};
\node[inner sep=0.15] (r5) at (1,-1) {$D^{[4,3]}$};
\node[inner sep=0.15] (bot) at (0,-1.5) {$D^{[6,1]}$};
\node[inner sep=0.15] (plus) at (0,0) {$\oplus$};
\node[draw=gray, fit = (top), inner sep=1] {};
\node[draw=gray, fit = (l1) (l2), inner sep=.5] {};
\node[draw=gray, fit = (l3) (l4) (l5), inner sep=.5] {};
\node[draw=gray, fit = (bot), inner sep=1] {};
\node[draw=gray, fit = (r5) (r4), inner sep=.5] {};
\node[draw=gray, fit = (r3) (r2) (r1), inner sep=.5] {};
\node () at (0,1.65) {};
\node () at (0,-1.65) {};

\end{tikzpicture}
\\
\hline
\end{tabular}
\end{center}

\textbf{Notes on $Y^{[2^3,1]}$.} This module is the projective cover of $D^{[4,3]}$. It is a direct summand of the tensor product $Y^{[3,2^2]} \otimes D^{[6,1]}$.

\textbf{Notes on $Y^{[2^2,1^3]}$.} This module is the projective cover of $D^{[5,2]}$. It is a direct summand of the tensor product $Y^{[3,2,1^2]} \otimes D^{[6,1]}$. To examine its structure, we first construct from $Y^{[5,1^2]}$ a uniserial module with two composition factors $D^{[5,2]}$. Removing this module from the top and bottom of $Y^{[2^2,1^3]}$ isolates the four leftmost and four rightmost simple modules; the structure of this outer ladder can be confirmed by previously explained methods (e.g. see notes on $Y^{[3,2,1^2]}$ and $Y^{[3,1^3]}$). Additionally, isolating the four bottommost copies of $D^{[5,2]}$ from the third socle allows the edges between them to be confirmed. We finally check that there are no additional edges to the center copies of $D^{[5,2]}$ by removing the three bottommost copies of $D^{[5,2]}$ and the topmost copy of $D^{[5,2]}$; this new module has the remaining center copy of $D^{[5,2]}$ as a direct summand.

\textbf{Notes on $Y^{[2,1^5]}$.} This module is the projective cover of $D^{[6,1]}$. It may be constructed as the tensor product $Y^{[4,1^3]}\otimes U$ where $U$ is the 2-dimensional uniserial module with two copies of $D^{[7]}$ as composition factors. Such a module $U$ may be constructed by removing copies of $D^{[5,2]}$ from the top and bottom of $Y^{[5,1^2]}$. This tensor product is verified to be projective and indecomposable, and it has a copy of $D^{[6,1]}$ in its top layer.

\begin{center}
\begin{tabular}{ |c||c| }
\hline
$\lambda$&$[1^7]$\\
\hline\hline
$Y^\lambda$&
\begin{tikzpicture}[baseline=0pt]
\node[draw=gray, inner sep=0.5] (1) at (-.75,2.5) {$D^{[7]}$};
\node[inner xsep=0.15] (21) at (-2.25,1.5) {$D^{[4,2,1]}$};
\node[inner xsep=0.15] (22) at (-.75,1.5) {$D^{[7]}$};
\node[inner xsep=0.15] (23) at (.75,1.5) {$D^{[5,2]}$};
\node[inner xsep=0.15] (31) at (-2.25,.5) {$D^{[7]}$};
\node[inner xsep=0.15] (32) at (-.75,.5) {$D^{[4,2,1]}$};
\node[inner xsep=0.15] (33) at (.75,.5) {$D^{[7]}$};
\node[inner xsep=0.15] (34) at (2.25,.5) {$D^{[5,2]}$};
\node[inner xsep=0.15] (41) at (-2.25,-.5) {$D^{[5,2]}$};
\node[inner xsep=0.15] (42) at (-.75,-.5) {$D^{[7]}$};
\node[inner xsep=0.15] (43) at (.75,-.5) {$D^{[4,2,1]}$};
\node[inner xsep=0.15] (44) at (2.25,-.5) {$D^{[7]}$};
\node[inner xsep=0.15] (51) at (-.75,-1.5) {$D^{[5,2]}$};
\node[inner xsep=0.15] (52) at (.75,-1.5) {$D^{[7]}$};
\node[inner xsep=0.15] (53) at (2.25,-1.5) {$D^{[4,2,1]}$};
\node[draw=gray, inner sep=0.5] (6) at (.75,-2.5) {$D^{[7]}$};
\node () at (0,2.65) {};
\node () at (0,-2.65) {};

\node[draw=gray, fit = (22) (23), inner sep=1] {};
\node[draw=gray, fit = (51) (52), inner sep=.5] {};

\begin{pgfonlayer}{bg}
    \draw (1)--(21);
    \draw[ultra thick](21)--(31)--(41);
    \draw[thick,dotted] (41)--(51);
    \draw (51)--(6);
    \draw (1)--(22)--(32);
    \draw[ultra thick] (32) --(42);
    \draw (42)--(51);
    \draw (1)--(23)--(33);
    \draw[ultra thick] (33)--(43);
    \draw (43)--(52)--(6);
    \draw[thick,dotted] (23)--(34);
    \draw[ultra thick] (34)--(44)--(53);
    \draw (53)--(6);
    \draw (21) -- (32);
    \draw (31) -- (42);
    \draw (22) -- (34);
    \draw (33) -- (44);
    \draw (43)-- (53);
    \draw (41) -- (52);
\end{pgfonlayer}
\end{tikzpicture}
\\
\hline
\end{tabular}
\end{center}

\textbf{Notes on $Y^{[1^7]}$.} This module is the projective cover of $D^{[7]}$. It is a direct summand of the tensor product $Y^{[2,1^5]} \otimes D^{[6,1]}$. To confirm the structure of its heart, we first examine 2-step radical layers, finding them to be strings and sums of strings as shown. We now perform additional checks to confirm that no other edges exist.

Removing copies of $D^{[7]}$ from the top and bottom of the heart yields the direct sum indicated by the diagram, which rules out the existence of any additional edges that do not connect to one of those copies of $D^{[7]}$. We eliminate the possibility of cross-diagonals in the ladders by taking the top three-step radical layer of the heart, removing $D^{[4,2,1]}$ from its top and $D^{[7]}$ from its bottom, and noticing that the leftmost copies of $D^{[7]}$ and $D^{[5,2]}$ are a (uniserial) direct summand of the resulting module. Finally, we construct two uniserial modules from $Y^{[4,2,1]}$. These are $U:=D^{[5,2]}/D^{[7]}/D^{[4,2,1]}/D^{[7]}$ and $V:=D^{[7]}/D^{[4,2,1]}/D^{[7]}/D^{[5,2]}$. Removing sequentially $U$ from the bottom and $V$ from the top of $Y^{[1^7]}$ yields a module that decomposes as $(D^{[7]}/D^{[4,2,1]}/D^{[7]}) \oplus (D^{[7]}/D^{[4,2,1]}/D^{[7]})$; this decomposition rules out the possibility of a connection between the two center columns of $Y^{[1^7]}$.

\section{The module structure of projective modules for Schur algebras in characteristic 2}
As noted previously, we compute with a basic algebra that is Morita equivalent to the usual Schur algebra, and this makes no difference to the structural properties that we consider.
By abuse of notation we denote the Morita equivalent algebra by the same symbols as are often used for the actual Schur algebra. Our algebra is
$$S_{\FF_2}(n,n)=\End_{\FF_2S_n}(\bigoplus_{\lambda\vdash n}Y^\lambda)
$$
and we store it on the computer as a matrix with rows and columns indexed by partitions $\lambda$, where the $(\lambda,\mu)$-entry is a list of matrices that is a basis for $\Hom_{\FF_2 S_n}(Y^\lambda,Y^\mu)$. The powers of the radical of $S_{\FF_2}(n,n)$ are stored in a similar way. Because the algebra is basic, the radical consists of those matrices that differ from $S_{\FF_2}(n,n)$ only on the diagonal, where the entries span the maximal ideal of $\End_{\FF_2S_n}(Y^\mu)$. This ideal can in turn be computed as the span of all composites $Y^\mu\to Y^\nu \to Y^\mu$ with $\mu\ne \nu$, using the fact that $S_{\FF_2}(n,n)$ is quasi-hereditary, so that no simple module extends itself. Thus this endomorphism ring and its radical powers are immediately computable from the Young modules that have been constructed. From this we obtain the Gabriel quiver of the Schur algebra.

The indecomposable projective $P(\mu)\cong \Hom_{\FF_2 S_n}(Y^\mu,\bigoplus_{\lambda\vdash n}Y^\lambda)$ is stored as row $\mu$ of the matrix that stores $S_{\FF_2}(n,n)$, and in this way we parametrize the indecomposable projectives $P(\lambda)$ and their unique simple quotients $L^{\lambda}$ by partitions of $n$. Its radical series arises as part of the computation of the radical series of  $S_{\FF_2}(n,n)$.
From the radical series we use the quasi-hereditary structure given by the dominance order on the partitions to deduce the standard (or Weyl) modules $\Delta^\lambda$. The diagrams for the projective modules are then built up from the standard modules, and we exploit the information coming from the Gabriel quiver, comparison of structures in different projective modules, and the fact that the projective modules are also injective precisely when the corresponding Young module $Y^\lambda$ is projective, and hence has a simple socle. This happens when $\lambda$ is column $p$-regular, and it means that the $\Delta^\mu$ appearing as a factor in such $P(\lambda)$ with $\mu$ earliest in the dominance order has a simple socle.

This latter result about injectivity is well known, but it is hard to find an exact reference consistent with our conventions, and for the convenience of the reader we prove the implication we need here. For any $\FF_2S_n$-module $Y$ we write $Y^\natural:=\Hom_{\FF_2 S_n}(Y,\bigoplus_{\lambda\vdash n}Y^\lambda)$ and we will use the fact that the functor $Y\mapsto Y^\natural$ is a contravariant equivalence between the full subcategory of $\FF_2S_n$-modules that are direct sums of the $Y^\lambda$, and the full subcategory of $S_{\FF_2}(n,n)$-modules whose objects are projective.

\begin{proposition}
Let $Y^\mu$ be a projective Young module for $\FF_2S_n$. Then $Y^{\mu\natural}$ is injective (and also projective) as a $S_{\FF_2}(n,n)$-module.
\end{proposition}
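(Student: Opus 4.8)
The plan is to realise $Y^{\mu\natural}$ as a direct summand of a finite direct sum of copies of $T:=\bigoplus_{\lambda\vdash n}Y^\lambda$, now regarded as a module over $S:=S_{\FF_2}(n,n)=\End_{\FF_2S_n}(T)$, and then to observe that this module $T$ is simultaneously projective and injective over $S$. Throughout write $A:=\FF_2S_n$.

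The two facts I would isolate first are: (i) $A$ is a symmetric algebra (being a group algebra of a finite group), so $A\cong A^{*}$ as $A$-modules and projective $A$-modules are injective; and (ii) the regular module $A$ is itself a Young permutation module, namely $A=M^{[1^n]}$, since the Young subgroup $S_{[1^n]}$ is trivial. Hence $A$ is a direct sum of Young modules, so $A\in\operatorname{add}(T)$. In particular $T$ is a generator of $A$-mod, and since $A$ is self-injective $T$ is equally a cogenerator.

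From these I would deduce that $T$, viewed as an $S$-module, is both projective and injective. Projectivity is immediate from the contravariant equivalence recalled in the paper: since $A\in\operatorname{add}(T)$, applying $(-)^\natural=\Hom_A(-,T)$ gives $T\cong\Hom_A(A,T)=A^\natural$, a projective $S$-module. For injectivity I would use that an $S$-module $N$ is injective precisely when $N^{*}$ is projective over $S^{\mathrm{op}}$, together with the isomorphism $\Hom_A(T,A)\cong T^{*}$ of $S^{\mathrm{op}}$-modules (valid because $A\cong A^{*}$): as $A\in\operatorname{add}(T)$, the covariant companion $\Hom_A(T,-)$ of the equivalence makes $\Hom_A(T,A)$ a projective $S^{\mathrm{op}}$-module, so $T^{*}$ is projective over $S^{\mathrm{op}}$ and $T$ is injective over $S$. (This is the Morita--Tachikawa phenomenon: the endomorphism algebra of a generator--cogenerator has $T$ as its minimal faithful, projective--injective module.) Consequently every direct summand of a finite direct sum of copies of $T$ is again projective--injective over $S$.

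For the conclusion: $Y^\mu$ is a projective $A$-module, hence a direct summand of $A^{\oplus k}$ for some $k$; applying the additive functor $(-)^\natural$ exhibits $Y^{\mu\natural}$ as a direct summand of $\Hom_A(A^{\oplus k},T)\cong T^{\oplus k}$ as an $S$-module, so $Y^{\mu\natural}$ is projective--injective, and in particular injective. I expect the one step needing genuine care to be the injectivity of $T$ over $S$ — concretely, keeping straight the left/right $S$-module conventions and the duality identifications while invoking the equivalence — whereas the reduction to that fact and the symmetric-algebra observations are routine given the machinery already set up in the paper.
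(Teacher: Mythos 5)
Your argument is correct, but it takes a genuinely different route from the paper's. The paper proves injectivity by a direct $\Ext$ computation: it takes a projective resolution $\cdots\to Y_1^\natural\to Y_0^\natural\to X\to 0$ of an arbitrary $S_{\FF_2}(n,n)$-module $X$, observes that the underlying complex $Y_0\to Y_1\to\cdots$ of Young modules is acyclic away from degree $0$ because $\bigoplus_{\lambda}Y^\lambda$ contains every indecomposable injective $\FF_2S_n$-module as a summand, and then uses exactness of $\Hom_{\FF_2S_n}(Y^\mu,-)$ on that complex ($Y^\mu$ being projective) to get $\Ext^1_{S_{\FF_2}(n,n)}(X,Y^{\mu\natural})=0$. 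You instead reduce everything to the single module $T=\bigoplus_{\lambda}Y^\lambda$: since $\FF_2S_n=M^{[1^n]}$ is itself a direct sum of Young modules, the projective module $Y^\mu$ is a summand of finitely many copies of $\FF_2S_n$, hence $Y^{\mu\natural}$ is a summand of copies of $T\cong(\FF_2S_n)^\natural$, and you verify that $T$ is projective--injective over the endomorphism algebra via the symmetric-algebra identification $\Hom_{\FF_2S_n}(T,\FF_2S_n)\cong T^*$. Both arguments rest on the same essential input --- that $\bigoplus_{\lambda}Y^\lambda$ is a generator--cogenerator, because the group algebra is self-injective and is itself a Young permutation module --- but your packaging is the Morita--Tachikawa one and yields slightly more (every such $Y^{\mu\natural}$ lies in the additive closure of the single projective--injective module $(\FF_2S_n)^\natural$), at the cost of the left/right and duality bookkeeping you rightly flag as the delicate point; the paper's resolution argument avoids any mention of the opposite algebra or of vector-space duality.
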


\begin{proof}
We have already been assuming the projectivity of $Y^{\mu\natural}$ and, indeed, it is a direct summand of $S_{\FF_2}(n,n)$. To show injectivity when $Y^\mu$ is projective, we show that $\Ext_{S_{\FF_2}(n,n)}^1(X,Y^{\mu\natural})=0$ for all $S_{\FF_2}(n,n)$-modules $X$. Take a $S_{\FF_2}(n,n)$-projective resolution 
$$
\cdots\to Y_2^\natural\to Y_1^\natural\to Y_0^\natural\to X\to 0
$$
where the $Y_i$ are direct sums of Young modules. All projectives may be written this way, by the equivalence of categories already referred to, and it arises by applying $\natural$ to a complex
$$
Y_0\to Y_1\to Y_2\to \cdots
$$
This latter complex is acyclic, except at 0, because $\bigoplus_{\lambda\vdash n}Y^\lambda$ has all indecomposable injective $\FF_2S_n$ modules as summands and the resolution was acyclic.  Consider the cochain complex
$$
\Hom_{S_{\FF_2}(n,n)}(Y_0^\natural, Y^{\mu\natural})\to 
\Hom_{S_{\FF_2}(n,n)}(Y_1^\natural, Y^{\mu\natural})\to 
\cdots
$$
whose cohomology computes the $\Ext_{S_{\FF_2}(n,n)}^i(X,Y^{\mu\natural})$. By the equivalence of categories, it is isomorphic to 
$$
\Hom_{\FF_2S_n}(Y^\mu,Y_0)\to 
\Hom_{\FF_2S_n}(Y^\mu,Y_1)\to 
\Hom_{\FF_2S_n}(Y^\mu,Y_2)\to \cdots
$$
and this is acyclic, except at 0, because $Y^\mu$ is projective. This shows that $\Ext_{S_{\FF_2}(n,n)}^1(X,Y^{\mu\natural})=0$ so that $Y^{\mu\natural}$ is injective.
\end{proof}

We display the standard or Weyl factors $\Delta$ in a quasi-hereditary filtration of the projective modules using similar conventions to the ones we used for Specht factors of Young modules: much of the time a $\Delta$ factor is indicated by joining its composition factors with thick lines, but sometimes we put the $\Delta$ factor in a box.

\subsection{Projective modules for $S_{\FF_2}(1,1)$}
There is one projective module $P[1]$, and it is simple.

\begin{center}
\begin{tabular}{ |c||c| }
\hline
$\lambda$&$[1]$\\
\hline\hline
$P(\lambda)$&
\begin{tikzpicture}[baseline=0pt]
\node[inner sep=0.15] (1) at  (0,0.2)  {$L^{[1]}$};
\node[draw=gray, fit = (1), inner sep=1] {};
\node () at (0,.4) {};
\node () at (0,0) {};
\end{tikzpicture}

\\
\hline
\end{tabular}
\end{center}

\subsection{Projective modules for $S_{\FF_2}(2,2)$}

\begin{center}
\begin{tabular}{ |c||c|c| }
\hline
$\lambda$&$[2]$&$[1^2]$\\
\hline\hline
$P(\lambda)$&

\begin{tikzpicture}[baseline=0pt]

\node[inner sep=0.15] (1) at  (0,.45)   {$L^{[2]}$};
\node[inner sep=0.15] (2) at  (0,-.05)  {$L^{[1^2]}$};

\node[draw=gray, fit = (1) (2), inner sep=.5] {};
\node () at (0,.9) {};
\node () at (0,-.5) {};
\end{tikzpicture}
&
\begin{tikzpicture}[baseline=0pt]

\node[inner sep=0.15] (1) at  (0,.7)   {$L^{[1^2]}$};
\node[inner sep=0.15] (2) at  (0,0.2)   {$L^{[2]}$};
\node[inner sep=0.15] (3) at  (0,-.3)  {$L^{[1^2]}$};

\node[draw=gray, fit = (1), inner sep=.5] {};
\node[draw=gray, fit = (2) (3), inner sep=.5] {};

\end{tikzpicture}

\\
\hline
\end{tabular}
\end{center}

\subsection{Projective modules for $S_{\FF_2}(3,3)$}

\begin{center}
\begin{tabular}{ |c||c|c|c| }
\hline
$\lambda$&$[3]$&$[2,1]$&$[1^3]$\\
\hline\hline
$P(\lambda)$&
\begin{tikzpicture}[baseline=0pt]
\node[inner sep=0.15] (1) at  (0,.45)   {$L^{[3]}$};
\node[inner sep=0.15] (2) at  (0,-.05)  {$L^{[1^3]}$};

\node[draw=gray, fit = (1) (2), inner sep=.5] {};
\node () at (0,.9) {};
\node () at (0,-.5) {};
\end{tikzpicture}
&
\begin{tikzpicture}[baseline=0pt]
\node[inner sep=0.5] (1) at  (0,.175)  {$L^{[2,1]}$};
\node[draw=gray, fit = (1), inner sep=1] {};
\end{tikzpicture}&

\begin{tikzpicture}[baseline=0pt]
\node[inner sep=0.15] (1) at  (0,.7)   {$L^{[1^3]}$};
\node[inner sep=0.15] (2) at  (0,0.2)   {$L^{[3]}$};
\node[inner sep=0.15] (3) at  (0,-.3)  {$L^{[1^3]}$};

\node[draw=gray, fit = (1), inner sep=.5] {};
\node[draw=gray, fit = (2) (3), inner sep=.5] {};
\end{tikzpicture}

\\
\hline
\end{tabular}
\end{center}

\subsection{Projective modules for $S_{\FF_2}(4,4)$}

\begin{center}
\begin{tabular}{ |c||c|c|c| }
\hline
$\lambda$&$[4]$&$[3,1]$&$[2^2]$\\
\hline
$P(\lambda)$ &
\begin{tikzpicture}[baseline=0pt]

\node[inner ysep=0.15] (1) at  (0,.75)   {$L^{[4]}$};
\node[inner sep=0.15] (2) at  (0,.25)   {$L^{[2^2]}$};
\node[inner sep=0.15] (3) at  (0,-.25)  {$L^{[3,1]}$};
\node[inner sep=0.15] (4) at  (0,-.75)   {$L^{[1^4]}$};

\node[draw=gray, fit = (1) (4), inner sep=.5] {};

\end{tikzpicture}

&

\begin{tikzpicture}[baseline=0pt]

\node[inner xsep=0.15] (top) at  (0,2.5)   {$L^{[3,1]}$};
\node[inner xsep=0.15] (l1) at  (-1,1.5)   {$L^{[2^2]}$};
\node[inner xsep=0.15] (l2) at  (-1,.5)  {$L^{[4]}$};
\node[inner xsep=0.15] (l3) at  (-1,-.5)   {$L^{[2^2]}$};
\node[inner xsep=0.15] (l4) at  (-1,-1.5)   {$L^{[3,1]}$};
\node[inner xsep=0.15] (bot) at  (0,-2.5)   {$L^{[1^4]}$};
\node[inner xsep=0.15] (r2) at  (1,-1)  {$L^{[2,1^2]}$};
\node[inner xsep=0.15] (r1) at  (1,1)   {$L^{[1^4]}$};

\begin{pgfonlayer}{bg}
    \draw (l1) -- (l2);
    \draw[ultra thick] (l2)-- (l3) -- (l4) -- (bot);
    \draw (bot) -- (r2);
    \draw[ultra thick] (r2)-- (r1) -- (top) -- (l1) -- (r2);
\end{pgfonlayer}
\end{tikzpicture}

& 
\begin{tikzpicture}[baseline=0pt]

\node[inner xsep=0.15] (top) at  (0,2)   {$L^{[2^2]}$};
\node[inner xsep=0.15] (l1) at  (-1.5,1)   {$L^{[4]}$};
\node[inner xsep=0.15] (l2) at  (-1.5,0)  {$L^{[2^2]}$};
\node[inner xsep=0.15] (l3) at  (-1.5,-1)   {$L^{[3,1]}$};
\node[inner xsep=0.15] (lbot) at  (-1.5,-2)   {$L^{[1^4]}$};
\node[inner xsep=0.15] (rbot) at  (1,-2)   {$L^{[2,1^2]}$};
\node[inner xsep=0.15] (c1) at  (0,.5)  {$L^{[2,1^2]}$};
\node[inner xsep=0.15] (c2) at  (0,-.5)   {$L^{[2^2]}$};
\node[inner xsep=0.15] (r1) at  (2,.5)  {$L^{[3,1]}$};
\node[inner xsep=0.15] (r2) at  (2,-.5)   {$L^{[1^4]}$};

\begin{pgfonlayer}{bg}
\draw (top) -- (l1);
\draw[ultra thick] (l1)--(l2)--(l3)--(lbot);
\draw (lbot)--(c1);
\draw[ultra thick] (c1)--(top);
\draw (top)--(r1);
\draw[ultra thick] (r1)--(r2)--(rbot)--(c2);
\draw (c2) --(c1);
\draw[ultra thick] (r1)--(c2);
\end{pgfonlayer}
\end{tikzpicture}
\\
\hline
\end{tabular}
\end{center}

\begin{center}
\begin{tabular}{ |c||c|c|c|c| }
\hline
$\lambda$&$[2,1^2]$&$[1^4]$\\
\hline
 $P(\lambda)$ 

&

\begin{tikzpicture}[baseline=0pt]

\node[inner xsep=0.15] (1) at  (0,2)   {$L^{[2,1^2]}$};
\node[inner xsep=0.15] (21) at  (-1,1)   {$L^{[2^2]}$};
\node[inner xsep=0.15] (22) at  (1,1)  {$L^{[1^4]}$};
\node[inner xsep=0.15] (31) at  (-2,0)   {$L^{[2,1^2]}$};
\node[inner xsep=0.15] (32) at  (0,0)   {$L^{[3,1]}$};
\node[inner xsep=0.15] (41) at  (-1,-1)   {$L^{[2^2]}$};
\node[inner xsep=0.15] (42) at  (1,-1)  {$L^{[1^4]}$};
\node[inner xsep=0.15] (5) at  (0,-2)   {$L^{[2,1^2]}$};

\begin{pgfonlayer}{bg}
    \draw (1)--(21)--(32)--(22);
    \draw[ultra thick] (22)--(1);
    \draw[ultra thick] (21)--(31);
    \draw (31)--(41);
    \draw[ultra thick] (41)--(32);
    \draw[ultra thick] (41)--(5)--(42)--(32);
\end{pgfonlayer}
\end{tikzpicture}

& 
\begin{tikzpicture}[baseline=0pt]

\node () at (0,3.15) {};
\node[draw=gray, inner sep=0.5] (top) at  (0,3)   {$L^{[1^4]}$};
\node[inner xsep=0.15] (l1) at  (-1.5,2)   {$L^{[2,1^2]}$};
\node[inner xsep=0.15] (l2) at  (-1.5,0)  {$L^{[1^4]}$};
\node[inner xsep=0.15] (l3) at  (-1.5,-2)   {$L^{[3,1]}$};
\node[inner xsep=0.15] (bot) at  (0,-3)   {$L^{[1^4]}$};
\node[inner xsep=0.15] (c1) at  (0,1)   {$L^{[2^2]}$};
\node[inner xsep=0.15] (c2) at  (0,0)  {$L^{[4]}$};
\node[inner xsep=0.15] (c3) at  (0,-1)   {$L^{[2^2]}$};
\node[inner xsep=0.15] (r1) at  (1.5,2)  {$L^{[3,1]}$};
\node[inner xsep=0.15] (r2) at  (1.5,0)   {$L^{[1^4]}$};
\node[inner xsep=0.15] (r3) at  (1.5,-2)   {$L^{[2,1^2]}$};

\begin{pgfonlayer}{bg}
\draw (top) -- (l1);
\draw[ultra thick] (l1)--(l2);
\draw (l2)--(l3);
\draw[ultra thick] (l3)--(bot);
\draw (l1)--(c3);
\draw[ultra thick] (c3)--(l3);
\draw (c1)--(c2);
\draw[ultra thick] (c2)--(c3);
\draw (top)--(r1);
\draw[ultra thick] (r1)--(c1)--(r3);
\draw (r3)--(bot);
\draw[ultra thick] (r1)--(r2)--(r3);
\end{pgfonlayer}
\end{tikzpicture}
\\
\hline
\end{tabular}
\end{center}

\subsection{Projective modules for $S_{\FF_2}(5,5)$}

\begin{center}
\begin{tabular}{ |c||c|c|c|c| }
\hline
$\lambda$&$[5]$&$[4,1]$&$[3,2]$&$[3,1^2]$\\
\hline
 $P(\lambda)$ &
\begin{tikzpicture}[baseline=0pt]

\node[inner sep=0.15] (1) at  (0,.75)   {$L^{[5]}$};
\node[inner xsep=0.15] (2) at  (0,.25)   {$L^{[3,2]}$};
\node[inner xsep=0.15] (3) at  (0,-.25)  {$L^{[3,1^2]}$};
\node[inner xsep=0.15] (4) at  (0,-.75)   {$L^{[1^5]}$};

\node[draw=gray, fit = (1) (3) (4), inner sep=.5] {};

\end{tikzpicture}

&
\begin{tikzpicture}[baseline=0pt]

\node[inner xsep=0.15] (1) at  (0,.25)   {$L^{[4,1]}$};
\node[inner xsep=0.15] (2) at  (0,-.25)   {$L^{[2,1^3]}$};

\node[draw=gray, fit = (1) (2), inner sep=.5] {};

\end{tikzpicture}
&
\begin{tikzpicture}[baseline=0pt]

\node[inner xsep=0.15] (top) at  (0,2)   {$L^{[3,2]}$};
\node[inner xsep=0.15] (l1) at  (-.75,1)   {$L^{[5]}$};
\node[inner xsep=0.15] (l2) at  (-.75,0)  {$L^{[3,2]}$};
\node[inner xsep=0.15] (l3) at  (-.75,-1)   {$L^{[3,1^2]}$};
\node[inner xsep=0.15] (bot) at  (0,-2)   {$L^{[1^5]}$};
\node[inner xsep=0.15] (r3) at  (.75,-1)  {$L^{[2^2,1]}$};
\node[inner xsep=0.15] (r2) at  (.75,0)  {$L^{[1^5]}$};
\node[inner xsep=0.15] (r1) at  (.75,1)   {$L^{[3,1^2]}$};

\begin{pgfonlayer}{bg}
\draw (top) -- (l1);
\draw[ultra thick] (l1)--(l2)--(l3)--(bot);
\draw (bot)--(r3);
\draw[ultra thick] (r3)--(r2)--(r1)--(top);
\end{pgfonlayer}

\end{tikzpicture}

& 
\begin{tikzpicture}[baseline=0pt]

\node[inner xsep=0.15] (top) at  (0,3.5)   {$L^{[3,1^2]}$};
\node[inner xsep=0.15] (l1) at  (-1,1.5)   {$L^{[3,2]}$};
\node[inner xsep=0.15] (l2) at  (-1,-.5)  {$L^{[5]}$};
\node[inner xsep=0.15] (l3) at  (-1,-1.5)   {$L^{[3,2]}$};
\node[inner xsep=0.15] (l4) at  (-1,-2.5)   {$L^{[3,1^2]}$};
\node[inner xsep=0.15] (r1) at  (1,2.5)   {$L^{[1^5]}$};
\node[inner xsep=0.15] (r2) at  (1,1.5)  {$L^{[2,1^2]}$};
\node[inner xsep=0.15] (r3) at  (1,0.5)   {$L^{[1^5]}$};
\node[inner xsep=0.15] (r4) at  (1,-.5)   {$L^{[3,1^2]}$};
\node[inner xsep=0.15] (r5) at  (1,-1.5)   {$L^{[1^5]}$};
\node[inner xsep=0.15] (r6) at  (1,-2.5)   {$L^{[2^2,1]}$};
\node[inner xsep=0.15] (bot) at  (0,-3.5)   {$L^{[1^5]}$};

\begin{pgfonlayer}{bg}
\draw (top) -- (l1) --(l2);
\draw[ultra thick] (l2)--(l3)--(l4)--(bot);
\draw (bot)--(r6);
\draw[ultra thick] (r6)--(r5)--(r4);
\draw (r4)--(r3);
\draw[ultra thick] (r3)--(r2)--(r1)--(top);
\draw[ultra thick] (l1)--(r4);
\end{pgfonlayer}
\end{tikzpicture}
\\
\hline
\end{tabular}
\end{center}

\begin{center}
\begin{tabular}{ |c||c|c|c| }
\hline
$\lambda$&$[2^2,1]$&$[2,1^3]$&$[1^5]$\\
\hline
 $P(\lambda)$ 

&

\begin{tikzpicture}[baseline=0pt]

\node[inner xsep=0.15] (1) at  (0,4)   {$L^{[2^2,1]}$};
\node[inner xsep=0.15] (2) at  (0,3)   {$L^{[1^5]}$};
\node[inner xsep=0.15] (3) at  (0,2)  {$L^{[3,1^2]}$};
\node[inner xsep=0.15] (l) at  (-1,0)   {$L^{[3,2]}$};
\node[inner xsep=0.15] (4) at  (1,1)   {$L^{[1^5]}$};
\node[inner xsep=0.15] (5) at  (1,0)   {$L^{[2,1^2]}$};
\node[inner xsep=0.15] (6) at  (1,-1)  {$L^{[1^5]}$};
\node[inner xsep=0.15] (7) at  (0,-2)   {$L^{[3,1^2]}$};
\node[inner xsep=0.15] (8) at  (0,-3)   {$L^{[1^5]}$};
\node[inner xsep=0.15] (9) at  (0,-4)  {$L^{[2^2,1]}$};

\begin{pgfonlayer}{bg}
    \draw[ultra thick] (1)--(2);
    \draw (2)--(3);
    \draw[ultra thick] (3)--(4)--(5)--(6);
    \draw (6)--(7);
    \draw (3)--(l);
    \draw[ultra thick](l)--(7)--(8)--(9);
\end{pgfonlayer}
\end{tikzpicture}
&
\begin{tikzpicture}[baseline=0pt]

\node[inner sep=0.15] (1) at  (0,.5)   {$L^{[2,1^3]}$};
\node[inner sep=0.15] (2) at  (0,0)   {$L^{[4,1]}$};
\node[inner sep=0.15] (3) at  (0,-.5)  {$L^{[2,1^3]}$};

\node[draw=gray, fit = (1), inner sep=.5] {};
\node[draw=gray, fit = (2) (3), inner sep=.5] {};

\end{tikzpicture}
& 
\begin{tikzpicture}[baseline=0pt]

\node () at (0,4.15) {};
\node[draw=gray, inner sep=0.5] (top) at  (0,4)   {$L^{[1^5]}$};
\node[inner xsep=0.15] (l1) at  (-1.5,3)   {$L^{[3,1^2]}$};
\node[inner xsep=0.15] (l2) at  (-1.5,2)  {$L^{[1^5]}$};
\node[inner xsep=0.15] (l3) at  (-1.5,1)   {$L^{[2^2,1]}$};
\node[inner xsep=0.15] (l4) at  (-1.5,0)   {$L^{[1^5]}$};
\node[inner xsep=0.15] (l5) at  (-1.5,-1)  {$L^{[3,1^2]}$};
\node[inner xsep=0.15] (l6) at  (-1.5,-2)   {$L^{[1^5]}$};
\node[inner xsep=0.15] (l7) at  (-1.5,-3)   {$L^{[2^2,1]}$};
\node[inner xsep=0.15] (bot) at  (0,-4)   {$L^{[1^5]}$};
\node[inner xsep=0.15] (c1) at  (0,1)   {$L^{[3,2]}$};
\node[inner xsep=0.15] (c2) at  (0,0)  {$L^{[5]}$};
\node[inner xsep=0.15] (c3) at  (0,-1)   {$L^{[3,2]}$};
\node[inner xsep=0.15] (r1) at  (1.5,3)  {$L^{[2^2,1]}$};
\node[inner xsep=0.15] (r2) at  (1.5,2)   {$L^{[1^5]}$};
\node[inner xsep=0.15] (r3) at  (1.5,1)   {$L^{[3,1^2]}$};
\node[inner xsep=0.15] (r4) at  (1.5,0)  {$L^{[1^5]}$};
\node[inner xsep=0.15] (r5) at  (1.5,-1)   {$L^{[2^2,1]}$};
\node[inner xsep=0.15] (r6) at  (1.5,-2)   {$L^{[1^5]}$};
\node[inner xsep=0.15] (r7) at  (1.5,-3)  {$L^{[3,1^2]}$};

\begin{pgfonlayer}{bg}
\draw (top) -- (l1);
\draw[ultra thick] (l1)--(l2)--(l3)--(l4);
\draw (l4)--(l5);
\draw[ultra thick] (l5)--(l6)--(l7);
\draw (l7)--(bot);
\draw (top) -- (r1);
\draw[ultra thick] (r1)--(r2);
\draw (r2)--(r3);
\draw[ultra thick] (r3)--(r4) --(r5)--(r6);
\draw (r6)--(r7);
\draw[ultra thick] (r7)--(bot);
\draw (l1)--(c1);
\draw[ultra thick] (c1)--(l5);
\draw (r3)--(c3);
\draw[ultra thick] (c3)--(r7);
\draw (c1) -- (c2);
\draw[ultra thick] (c2)--(c3);
\end{pgfonlayer}
\end{tikzpicture}
\\
\hline
\end{tabular}
\end{center}

\subsection{The Schur algebra $S_{\FF_2}(6,6)$}
We do not give the full structure of the projective modules in this case as the larger diagrams are complicated, with several tens of composition factors and elaborate Ext structure, so that the point of the diagram as a way to understand the structure of the modules is defeated. As partial information we give diagrams for the standard modules $\Delta^\lambda$ that arise in its quasi-hereditary structure, also known as the Weyl modules. The composition factor multiplicities in these modules are the decomposition numbers and this information is given in \cite{Gra} and \cite{Jam3}. We note that the decomposition matrix $D$ satisfies $D^TD=C$, the Cartan matrix.

\noindent Weyl module structure:
\begin{center}
\begin{tabular}{ |c||c|c|c| }
\hline
$\lambda$&$[6]$&$[5,1]$&$[4,2]$\\
\hline\hline
$\Delta^\lambda$&

\begin{tikzpicture}[baseline=0pt, xscale=1.3,yscale=1.3]
\node[inner sep=0.15] (1) at  (0,1.4)  {$L^{[6]}$};
\node[inner sep=0.15] (2) at  (-0.6,0.8)   {$L^{[5,1]}$};
\node[inner sep=0.15] (3) at  (0.6,0.8)   {$L^{[2^3]}$};
\node[inner sep=0.15] (4) at  (0,0.2)  {$L^{[3^2]}$};
\node[inner sep=0.15] (5) at  (0,-0.3)  {$L^{[3,1^3]}$};
\node[inner sep=0.15] (6) at  (0,-0.8)  {$L^{[1^6]}$};
\draw (1) -- (2) -- (4);
\draw (1) -- (3) -- (4);
\draw (4) -- (5);
\draw (5) -- (6);
\end{tikzpicture}

&

\begin{tikzpicture}[baseline=0pt, xscale=1.3,yscale=1.3]
\node[inner sep=0.15] (1) at  (0,1.4)  {$L^{[5,1]}$};
\node[inner sep=0.15] (2) at  (0,0.8)   {$L^{[3^2]}$};
\node[inner sep=0.15] (3) at  (-0.6,0.2)   {$L^{[3,1^3]}$};
\node[inner sep=0.15] (4) at  (0.6,0.2)  {$L^{[4,2]}$};
\node[inner sep=0.15] (5) at  (-0.6,-0.4)   {$L^{[1^6]}$};
\node[inner sep=0.15] (6) at  (0.6,-0.4)  {$L^{[4,1^2]}$};
\node[inner sep=0.15] (7) at  (0,-1)  {$L^{[2,1^4]}$};
\draw (1) -- (2) -- (3) -- (5) -- (7);
\draw (2) -- (4) -- (6) -- (7);
\node () at (0,1.6) {};
\node () at (0,-1.3) {};
\end{tikzpicture}

&

\begin{tikzpicture}[baseline=0pt, xscale=1.3,yscale=1.3]
\node[inner sep=0.15] (1) at  (0,1.4)  {$L^{[4,2]}$};
\node[inner sep=0.15] (2) at  (-0.6, 0.8)   {$L^{[4,1^2]}$};
\node[inner sep=0.15] (3) at  (0.6, 0.8)   {$L^{[3^2]}$};
\node[inner sep=0.15] (4) at  (-1.2,0.2)  {$L^{[2,1^4]}$};
\node[inner sep=0.15] (5) at  (0,0.2)   {$L^{[3,1^3]}$};
\node[inner sep=0.15] (6) at  (1.2,0.2)  {$L^{[2^3]}$};
\node[inner sep=0.15] (7) at  (-0.6, -0.4)  {$L^{[1^6]}$};
\node[inner sep=0.15] (8) at  (0, -1)   {$L^{[2^2,1^2]}$};
\draw (1) -- (2) -- (4) -- (7) -- (8);
\draw (2) -- (5) -- (7);
\draw (1) -- (3) -- (5);
\draw (3) -- (6) -- (8);
\draw (2) -- (6);
\end{tikzpicture}

\\
\hline
\end{tabular}
\end{center}

\begin{center}
\begin{tabular}{ |c||c|c|c|c| }
\hline
$\lambda$&$[4,1^2]$&$[3^2]$&$[3,2,1]$&$[3,1^3]$\\
\hline\hline
$\Delta^\lambda$&

\begin{tikzpicture}[baseline=0pt, xscale=1.3,yscale=1.3]
\node[inner sep=0.15] (1) at  (0,1.4)  {$L^{[4,1^2]}$};
\node[inner sep=0.15] (2) at  (-0.8,0.8)  {$L^{[2,1^4]}$};
\node[inner sep=0.15] (3) at  (0,0.8)   {$L^{[3,1^3]}$};
\node[inner sep=0.15] (4) at  (0.8,0.8)  {$L^{[2^3]}$};
\node[inner sep=0.15] (5) at  (0, 0.2)  {$L^{[1^6]}$};
\node[inner sep=0.15] (6) at  (0, -0.4)   {$L^{[2^2,1^2]}$};
\node[inner sep=0.15] (7) at  (0, -1)   {$L^{[1^6]}$};
\node () at (0,1.6) {};
\node () at (0,-1.2) {};
\draw (1) -- (3) -- (5) -- (6) -- (7);
\draw (1) -- (4) -- (6);
\draw (1) -- (2) -- (5);
\end{tikzpicture}

&

\begin{tikzpicture}[baseline=0pt, xscale=1.3,yscale=1.3]
\node[inner sep=0.15] (2) at  (0,1.1)   {$L^{[3^2]}$};
\node[inner sep=0.15] (3) at  (-0.6,0.5)   {$L^{[3,1^3]}$};
\node[inner sep=0.15] (4) at  (0.6,0.2)  {$L^{[2^3]}$};
\node[inner sep=0.15] (5) at  (-0.6,-0.1)   {$L^{[1^6]}$};
\node[inner sep=0.15] (7) at  (0,-0.7)  {$L^{[2^2,1^2]}$};
\draw(2) -- (3) -- (5) -- (7);
\draw (2) -- (4)  -- (7);
\end{tikzpicture}

&

\begin{tikzpicture}[baseline=0pt, xscale=1.3,yscale=1.3]
\node[inner sep=0.15] (3) at  (0,0.2)  {$L^{[3,2,1]}$};
\end{tikzpicture}

&

\begin{tikzpicture}[baseline=0pt, xscale=1.3,yscale=1.3]
\node[inner sep=0.15] (1) at  (0,1.2)  {$L^{[3,1^3]}$};
\node[inner sep=0.15] (2) at  (0,0.7)  {$L^{[1^6]}$};
\node[inner sep=0.15] (3) at  (0,0.2)   {$L^{[2^2,1^2]}$};
\node[inner sep=0.15] (4) at  (0,-0.3)  {$L^{[1^6]}$};
\node[inner sep=0.15] (5) at  (0,-0.8)  {$L^{[2,1^4]}$};
\node () at (0,1.4) {};
\node () at (0,-1) {};
\draw(1) -- (2) -- (3) -- (4) -- (5);
\end{tikzpicture}

\\
\hline
\end{tabular}
\end{center}

\begin{center}
\begin{tabular}{ |c||c|c|c|c| }
\hline
$\lambda$&$[2^3]$&$[2^2,1^2]$& $[2,1^4]$& $[1^6]$\\
\hline\hline
$\Delta^\lambda$&

\begin{tikzpicture}[baseline=0pt, xscale=1.3,yscale=1.3]
\node[inner sep=0.15] (1) at  (0,0.7)  {$L^{[2^3]}$};
\node[inner sep=0.15] (2) at  (0,0.2)   {$L^{[2^2,1^2]}$};
\node[inner sep=0.15] (3) at  (0,-0.3)  {$L^{[1^6]}$};
\node () at (0,0.9) {};
\node () at (0,-0.6) {};
\draw(1) -- (2) -- (3);
\end{tikzpicture}

&

\begin{tikzpicture}[baseline=0pt, xscale=1.3,yscale=1.3]
\node[inner sep=0.15] (1) at  (0,0.7)  {$L^{[2^2,1^2]}$};
\node[inner sep=0.15] (2) at  (0,0.2)   {$L^{[1^6]}$};
\node[inner sep=0.15] (3) at  (0,-0.3)  {$L^{[2,1^4]}$};
\draw(1) -- (2) -- (3);
\end{tikzpicture}

&

\begin{tikzpicture}[baseline=0pt, xscale=1.3,yscale=1.3]
\node[inner sep=0.15] (1) at  (0,0.4)   {$L^{[2,1^4]}$};
\node[inner sep=0.15] (2) at  (0,-0.1)  {$L^{[1^6]}$};
\draw(1) -- (2);
\end{tikzpicture}

&

\begin{tikzpicture}[baseline=0pt, xscale=1.3,yscale=1.3]
\node[inner sep=0.15] (3) at  (0,0.2)  {$L^{[1^6]}$};
\end{tikzpicture}

\\
\hline
\end{tabular}
\end{center}

The Gabriel quiver of $S_{\FF_2}(6,6)$ is the directed graph whose vertices are the isomorphism types of the simple modules, and where the number of edges from a simple $S$ to a simple $T$ is $\dim\Ext_{S_{\FF_2}(6,6)}^1(S,T)$. For Schur algebras this relation on $S$ and $T$ is symmetric because there is a duality on modules under which simple modules are self dual. This means that for each edge $S\to T$ there is also an edge $T\to S$.

Gabriel quiver:
\begin{center}
\begin{tikzpicture}[node distance=4cm, scale=2]

\node (A) at (3, 0) {$L^{[6]}$};
\node (B) at (4, 0) {$L^{[5,1]}$};
\node (C) at (3, 1) {$L^{[4,2]}$};
\node (D) at (2, 1) {$L^{[4,1^2]}$};
\node (E) at (4, 1) {$L^{[3^2]}$};
\node (F) at (0,2) {$L^{[3,2,1]}$};
\node (G) at (2, 2) {$L^{[3,1^3]}$};
\node (H) at (2, 0) {$L^{[2^3]}$};
\node (I) at (0,0) {$L^{[2^2,1^2]}$};
\node (K) at (1, 1) {$L^{[2,1^4]}$};
\node (L) at (0, 1) {$L^{[1^6]}$};
\draw[->, transform canvas={yshift=.06cm}] (I) edge (H) (H) edge (A) (A) edge (B);
\draw[->, transform canvas={yshift=-.06cm}] (H) edge (I) (A) edge (H) (B) edge (A);
\draw[->, transform canvas={yshift=.06cm}] (L) edge (K) (K) edge (D) (D) edge (C)  (C) edge (E);
\draw[->, transform canvas={yshift=-.06cm}] (K) edge (L) (D) edge (K) (C) edge (D)  (E) edge (C);
\draw[->, transform canvas={xshift=.06cm}] (I) edge (L) (H) edge (D) (B) edge (E)  (D) edge (G);
\draw[->, transform canvas={xshift=-.06cm}] (L) edge (I) (D) edge (H) (E) edge (B)  (G) edge (D);
\draw[->, transform canvas={xshift=.03cm, yshift=-.06cm}] (L) edge (G) (H) edge (E) ;
\draw[->, transform canvas={xshift=-.03cm, yshift=.06cm}] (G) edge (L) (E) edge (H) ;
\draw[->, transform canvas={xshift=.03cm, yshift=.06cm}] (E) edge (G) ;
\draw[->, transform canvas={xshift=-.03cm, yshift=-.06cm}] (G) edge (E);
\end{tikzpicture}
\end{center}

\subsection{The Schur algebra $S_{\FF_2}(7,7)$}

Because $S_7$ has two blocks over $\FF_2$, so does $S_{\FF_2}(7,7)$. We will call the block corresponding to the principal block of $\FF_2S_7$ block 1, and the other one block 2. We have seen that the non-principal block of $\FF_2S_7$ is Morita equivalent to the principal block of $\FF_2S_5$ in such a way that the diagrams for the Young modules in these blocks are the same, under a correspondence of partitions
$$
\begin{matrix}
[6,1]\\ [4,3]\\ [2^3,1] \\ [2,1^5]\\
\end{matrix}
\leftrightarrow
\begin{matrix}
[5]\\ [3,2]\\ [2^2,1] \\ [1^5]\\
\end{matrix}.
$$
Accordingly, block 2 of $S_{\FF_2}(7,7)$ is Morita equivalent to block 1 of $S_{\FF_2}(5,5)$, and the diagrams for projective modules are the same in these two blocks, after applying this correspondence of partitions. For this reason we omit the diagrams for block 2.

Block 1 of $S_{\FF_2}(7,7)$ is harder to describe and we give less complete information. As with $S_{\FF_2}(6,6)$ we give diagrams for the standard modules and the Gabriel quiver. The comments made about these structures in the context of $S_{\FF_2}(6,6)$ also apply here.

\noindent Weyl module structure for block 1:
\begin{center}
\begin{tabular}{ |c||c|c|c|c| }
\hline
$\lambda$&$[7]$&$[5,2]$&$[5,1^2]$&$[4,2,1]$\\
\hline\hline
$\Delta^\lambda$&

\begin{tikzpicture}[baseline=0pt, xscale=1.3,yscale=1.3]
\node[inner sep=0.15] (1) at  (0,1.2)  {$L^{[7]}$};
\node[inner sep=0.15] (2) at  (-0.5,0.7)   {$L^{[5,1^2]}$};
\node[inner sep=0.15] (3) at  (0.5,0.7)   {$L^{[3,2^2]}$};
\node[inner sep=0.15] (4) at  (0,0.2)  {$L^{[3^2,1]}$};
\node[inner sep=0.15] (5) at  (0,-0.3)  {$L^{[3,1^4]}$};
\node[inner sep=0.15] (6) at  (0,-0.8)  {$L^{[1^7]}$};
\draw (1) -- (2) -- (4);
\draw (1) -- (3) -- (4);
\draw (4) -- (5);
\draw (5) -- (6);
\end{tikzpicture}

&

\begin{tikzpicture}[baseline=0pt, xscale=1.3,yscale=1.3]
\node[inner sep=0.15] (1) at  (0,2)  {$L^{[5,2]}$};
\node[inner sep=0.15] (2) at  (0,1.5)  {$L^{[3^2,1]}$};
\node[inner sep=0.15] (3) at  (0,1.0)   {$L^{[5,1^2]}$};
\node[inner sep=0.15] (4) at  (0,0.5)  {$L^{[3^2,1]}$};
\node[inner sep=0.15] (5) at  (0,0)  {$L^{[3,1^4]}$};
\node[inner sep=0.15] (6) at  (0,-0.5)  {$L^{[4,2,1]}$};
\node[inner sep=0.15] (7) at  (0,-1)  {$L^{[3,1^4]}$};
\node[inner sep=0.15] (8) at  (0,-1.5)  {$L^{[2^2,1^3]}$};
\node () at (0,2.2) {};
\node () at (0,-1.7) {};
\draw(1) -- (2) -- (3) -- (4) -- (5) -- (6) -- (7) -- (8);
\end{tikzpicture}

&

\begin{tikzpicture}[baseline=0pt, xscale=1.3,yscale=1.3]
\node[inner sep=0.15] (3) at  (0,1.5)   {$L^{[5,1^2]}$};
\node[inner sep=0.15] (4) at  (0,1)  {$L^{[3^2,1]}$};
\node[inner sep=0.15] (5) at  (0,0.5)  {$L^{[3,1^4]}$};
\node[inner sep=0.15] (6) at  (-0.5,0)  {$L^{[4,2,1]}$};
\node[inner sep=0.15] (7) at  (-0.5,-0.5)  {$L^{[3,1^4]}$};
\node[inner sep=0.15] (8) at  (-0.5,-1)  {$L^{[2^2,1^3]}$};
\node[inner sep=0.15] (9) at  (0.5,0)  {$L^{[1^7]}$};
\draw (3) -- (4) -- (5) -- (6) -- (7) -- (8);
\draw (5) -- (9);
\end{tikzpicture}

&

\begin{tikzpicture}[baseline=0pt, xscale=1.3,yscale=1.3]
\node[inner sep=0.15] (1) at  (0,1.5)   {$L^{[4,2,1]}$};
\node[inner sep=0.15] (2) at  (0,1)  {$L^{[3,1^4]}$};
\node[inner sep=0.15] (3) at  (-0.5,0.5)  {$L^{[3^2,1]}$};
\node[inner sep=0.15] (4) at  (0.5,0.5)  {$L^{[2^2,1^3]}$};
\node[inner sep=0.15] (5) at  (0.5,0)  {$L^{[3,1^4]}$};
\node[inner sep=0.15] (6) at  (-0.5,-0.5)  {$L^{[3,2^2]}$};
\node[inner sep=0.15] (7) at  (0.5,-0.5)  {$L^{[1^7]}$};
\node[inner sep=0.15] (8) at  (0,-1)  {$L^{[3,2,1^2]}$};
\draw (1) -- (2) -- (3) -- (6)  -- (8);
\draw (2) -- (4) -- (5) -- (7)  -- (8);
\draw (3) -- (5);
\end{tikzpicture}

\\
\hline
\end{tabular}
\end{center}

\begin{center}
\begin{tabular}{ |c||c|c|c|c|c|c| }
\hline
$\lambda$&$[3^2,1]$&$[3,2^2]$&$[3,2,1^2]$&$[3,1^4]$&$[2^2,1^3]$& $[1^6]$\\
\hline\hline
$\Delta^\lambda$

&

\begin{tikzpicture}[baseline=0pt, xscale=1.3,yscale=1.3]
\node[inner sep=0.15] (2) at  (0,1)   {$L^{[3^2,1]}$};
\node[inner sep=0.15] (3) at  (-0.5,0.5)   {$L^{[3,1^4]}$};
\node[inner sep=0.15] (4) at  (0.5,0.25)  {$L^{[3,2^2]}$};
\node[inner sep=0.15] (5) at  (-0.5,0)   {$L^{[1^7]}$};
\node[inner sep=0.15] (7) at  (0,-0.5)  {$L^{[3,2,1^2]}$};
\node () at (0,1.2) {};
\node () at (0,-0.7) {};
\draw(2) -- (3) -- (5) -- (7);
\draw (2) -- (4)  -- (7);
\end{tikzpicture}

&

\begin{tikzpicture}[baseline=0pt, xscale=1.3,yscale=1.3]
\node[inner sep=0.15] (1) at  (0,0.7)  {$L^{[3,2^2]}$};
\node[inner sep=0.15] (2) at  (0,0.2)   {$L^{[3,2,1^2]}$};
\node[inner sep=0.15] (3) at  (0,-0.3)  {$L^{[1^7]}$};
\node () at (0,0.9) {};
\node () at (0,-0.6) {};
\draw(1) -- (2) -- (3);
\end{tikzpicture}

&

\begin{tikzpicture}[baseline=0pt, xscale=1.3,yscale=1.3]
\node[inner sep=0.15] (1) at  (0,1)  {$L^{[3,2,1^2]}$};
\node[inner sep=0.15] (2) at  (0,0.5)  {$L^{[1^7]}$};
\node[inner sep=0.15] (3) at  (0,0)  {$L^{[3,1^4]}$};
\node[inner sep=0.15] (4) at  (0,-0.5)  {$L^{[2^2,1^3]}$};
\draw (1) -- (2) -- (3) -- (4) ;
\end{tikzpicture}

&

\begin{tikzpicture}[baseline=0pt, xscale=1.3,yscale=1.3]
\node[inner sep=0.15] (1) at  (0,0.5)   {$L^{[3,1^4]}$};
\node[inner sep=0.15] (2) at  (-0.5,0)  {$L^{[2^2,1^3]}$};
\node[inner sep=0.15] (3) at  (0.5,0)  {$L^{[1^7]}$};
\draw(1) -- (2);
\draw(1) -- (3);
\end{tikzpicture}

&

\begin{tikzpicture}[baseline=0pt, xscale=1.3,yscale=1.3]
\node[inner sep=0.15] (3) at  (0,0.25)  {$L^{[2^2,1^3]}$};
\end{tikzpicture}

&

\begin{tikzpicture}[baseline=0pt, xscale=1.3,yscale=1.3]
\node[inner sep=0.15] (3) at  (0,0.25)  {$L^{[1^7]}$};
\end{tikzpicture}

\\
\hline
\end{tabular}
\end{center}

\noindent Gabriel quiver for block 1:
\begin{center}
    \begin{tikzpicture}[node distance=4cm, scale=2]
\node (A) at (0, 0) {$L^{[7]}$};
\node (B) at (1,2) {$L^{[5,2]}$};
\node (C) at (0, 1) {$L^{[5,1^2]}$};
\node (D) at (4, 1) {$L^{[4,2,1]}$};
\node (E) at (1, 1) {$L^{[3^2,1]}$};
\node (F) at (1,0) {$L^{[3,2^2]}$};
\node (G) at (2, 0) {$L^{[3,2,1^2]}$};
\node (H) at (3,1) {$L^{[3,1^4]}$};
\node (I) at (3,2) {$L^{[2^2,1^3]}$};
\node (J) at (3,0) {$L^{[1^7]}$};
\draw[->, transform canvas={yshift=.06cm}] (A) edge (F) (F) edge (G) (G) edge (J);
\draw[->, transform canvas={yshift=-.06cm}] (J) edge (G) (G) edge (F) (F) edge (A);
\draw[->, transform canvas={yshift=.06cm}] (C) edge (E) (E) edge (H) (H) edge (D);
\draw[->, transform canvas={yshift=-.06cm}] (D) edge (H) (H) edge (E) (E) edge (C);
\draw[->, transform canvas={xshift=.06cm}] (A) edge (C);
\draw[->, transform canvas={xshift=-.06cm}] (C) edge (A);
\draw[->, transform canvas={xshift=.06cm}] (F) edge (E) (E) edge (B);
\draw[->, transform canvas={xshift=-.06cm}] (B) edge (E) (E) edge (F);
\draw[->, transform canvas={xshift=.06cm}] (J) edge (H) (H) edge (I);
\draw[->, transform canvas={xshift=-.06cm}] (I) edge (H) (H) edge (J);
\end{tikzpicture}
\end{center}

\end{document}